\newtheorem{thm}{Theorem}[section]
\newtheorem*{thm-non}{Theorem}
\newtheorem{lem}[thm]{Lemma}
\newtheorem{prop}[thm]{Proposition}
\newtheorem{cor}[thm]{Corollary}
\theoremstyle{definition}
\newtheorem{defi}[thm]{Definition}
\theoremstyle{remark}
\newtheorem{rem}[thm]{Remark}
\DeclareMathOperator\Br{Br}
\DeclareMathOperator\End{End}
\DeclareMathOperator\Hom{Hom}
\DeclareMathOperator\Ext{Ext}
\DeclareMathOperator\Coh{Coh}
\DeclareMathOperator\id{id}
\DeclareMathOperator\M{M}
\DeclareMathOperator\Hilb{Hilb}
\DeclareMathOperator\Fix{Fix}
\DeclareMathOperator\tr{tr}
\DeclareMathOperator\Ima{Im}
\DeclareMathOperator\cok{Coker}
\DeclareMathOperator\Pic{Pic}
\DeclareMathOperator\modu{mod}
\DeclareMathOperator\supp{supp}
\DeclareMathOperator\Aut{Aut}
\begin{document}

\title{Rank one sheaves over quaternion algebras on Enriques surfaces}

\author{Fabian Reede}
\address{Institut f\"ur Algebraische Geometrie, Leibniz Universit\"at Hannover, Welfengarten 1, 30167 Hannover, Germany}
\email{reede@math.uni-hannover.de}

\subjclass[2010]{Primary: 14J60, Secondary: 14J28, 16H05}

\begin{abstract}
Let $X$ be an Enriques surface over the field of complex numbers. We prove that there exists a nontrivial quaternion algebra $\mathcal{A}$ on $X$. Then we study  the moduli scheme of torsion free $\mathcal{A}$-modules of rank one. Finally we prove that this moduli scheme is an \'{e}tale double cover of a Lagrangian subscheme in the corresponding moduli scheme on the associated covering K3 surface.
\end{abstract}

\maketitle

\section*{Introduction}
A \emph{noncommutative variety} is a pair $(X,\mathcal{A})$ consisting of a classical complex algebraic variety $X$ and a sheaf of noncommutative $\mathcal{O}_X$-algebras $\mathcal{A}$ of finite rank as an $\mathcal{O}_X$-module.

The algebras of interest in this article are \emph{Azumaya algebras}. These are algebras locally isomorphic to a matrix algebra $M_r(\mathcal{O}_X)$ with respect to the \'{e}tale topology. Especially interesting are the first nontrivial examples for $r=2$, the so called quaternion algebras, Azumaya algebras of rank four. These are generalizations of the classical quaternions $\mathbb{H}$.

Since the generic stalk of a quaternion algebra $\mathcal{A}$ is a central division ring over the function field of $X$, locally projective left $\mathcal{A}$-modules which are generically of rank one can be understood as line bundles on $(X,\mathcal{A})$. By \cite{hoff} there is a quasi-projective moduli scheme for these line bundles, a noncommutative Picard scheme, which can be compactified to a projective moduli scheme  $\M_{\mathcal{A}/X}$ by adding torsion free $\mathcal{A}$-modules generically of rank one.

We study in detail the situation of Enriques surfaces. We prove that every Enriques surface $X$ gives rise to a noncommutative Enriques surface $(X,\mathcal{A})$ with a quaternion algebra $\mathcal{A}$ on $X$. The main results of this article can be summarized as follows

\begin{thm-non}
Let $X$ be an Enriques surfaces, then there is a quaternion algebra $\mathcal{A}$ on $X$ representing the nontrivial class in $\Br(X)$. If $X$ is very general then
\begin{enumerate}[i)]
\item The moduli scheme $\M_{\mathcal{A}/X}$ of torsion free $\mathcal{A}$-modules of rank one is smooth.
\item Every torsion free $\mathcal{A}$-module of rank one can be deformed into a locally projective $\mathcal{A}$-module, i.e. the locus $\M_{\mathcal{A}/X}^{lp}$ of locally projective $\mathcal{A}$-modules is dense in $\M_{\mathcal{A}/X}$.
\end{enumerate}
Let $\overline{X}$ be the universal covering K3 surface of $X$ and denote the pullback of the quaternion algebra to $\overline{X}$ by $\overline{\mathcal{A}}$. For fixed Chern classes $c_1$ and $c_2$ we have
\begin{enumerate}[i)]
\setcounter{enumi}{2}
\item $\M_{\mathcal{A}/X,c_1,c_2}$ is an \'{e}tale double cover of a Lagrangian subscheme $\mathcal{L}\subset \M_{\overline{\mathcal{A}}/\overline{X},\overline{c_1},\overline{c_2}}$.
\end{enumerate} 
\end{thm-non}

The structure of this paper is as follows. We compare properties of modules over an Azumaya algebra on a smooth projective variety $W$ to those of the pullbacks to an \'{e}tale double cover $\overline{W}$ in section \ref{1}. In section \ref{2} we prove that a classical descent result for modules on the double cover is also true in the noncommutative setting. We look at the existence of Azumaya algebras on Enriques surfaces in section \ref{3}. In the final section \ref{4} we study moduli schemes of sheaves generically of rank one on a noncommutative Enriques surface. Many of the results in the last section are noncommutative analogues of results found by Kim in \cite{kim}. We work over the field of complex numbers $\mathbb{C}$.

\section{Modules over an Azumaya algebra and double coverings}\label{1}
In this section $W$ denotes a smooth projective complex variety of dimension $d$ together with a nontrivial 2-torsion line bundle $L$. By \cite[I.17]{hulek} there is an \'{e}tale Galois double cover
\begin{equation*}
q: \overline{W} \rightarrow W
\end{equation*}
with covering involution $\iota:\overline{W} \rightarrow\overline{W}$ such that
\begin{equation*}\label{push}
q_{*}\mathcal{O}_{\overline{W}}\cong\mathcal{O}_W\oplus L.
\end{equation*}

\begin{rem}
We make the following convention: for every coherent sheaf $E$ on $W$ we write $\overline{E}$ for the pullback to $\overline{W}$ along $q$, that is $\overline{E}:=q^{*}E$.
\end{rem}

\begin{defi}
A sheaf of $\mathcal{O}_W$-algebras $\mathcal{A}$ is called an \emph{Azumaya algebra} if it is locally free of finite rank and for every point $w\in W$ the fiber $\mathcal{A}(w)$ is a central simple algebra over the residue field $\mathbb{C}(w)$. Such a sheaf is called a quaternion algebra if $rk(\mathcal{A})=4$. Furthermore a coherent $\mathcal{O}_W$-module $E$ is said to be an Azumaya module or an $\mathcal{A}$-module if $E$ is also a left $\mathcal{A}$-module. 
\end{defi}

Azumaya algebras on $W$ are classified up to similarity by the Brauer group $\Br(W)$ of $W$. We say $\mathcal{A}$ is trivial if there is a locally free $\mathcal{O}_W$-module $P$ with $\mathcal{A}\cong \mathcal{E}nd_W(P)$ or equivalently $\left[\mathcal{A}\right]=0\in\Br(W)$. From now on, if not otherwise stated, by an Azumaya algebra $\mathcal{A}$ we mean a nontrivial Azumaya algebra. Furthermore we assume that there is a nontrivial Azumaya algebra $\mathcal{A}$ on $W$ such that $\overline{\mathcal{A}}$ is nontrivial on $\overline{W}$.  

\begin{lem}\label{hompull}
Assume $E$ and $F$ are $\mathcal{A}$-modules and $f : Z\rightarrow W$ is a flat morphism, then 
\begin{equation*}
\mathcal{H}om_{f^{*}\mathcal{A}}(f^{*}E,f^{*}F)\cong f^{*}{\mathcal{H}om_{\mathcal{A}}(E,F)}.
\end{equation*}
\end{lem}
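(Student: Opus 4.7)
My plan is to construct a natural comparison morphism $\theta \colon f^{*}\mathcal{H}om_{\mathcal{A}}(E,F)\to \mathcal{H}om_{f^{*}\mathcal{A}}(f^{*}E,f^{*}F)$ and verify it is an isomorphism by reducing to the case of free $\mathcal{A}$-modules. The map $\theta$ is adjoint to the morphism obtained by pulling back the evaluation $\mathcal{H}om_{\mathcal{A}}(E,F)\otimes_{\mathcal{O}_W} E \to F$, or equivalently, on local sections $\theta$ sends a local homomorphism $\varphi\colon E\to F$ of $\mathcal{A}$-modules to its pullback $f^{*}\varphi$, which is clearly $f^{*}\mathcal{A}$-linear. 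Both sides are coherent, so it suffices to check the statement locally on $W$.

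Once $\theta$ is fixed, the verification is a two-step reduction. First, for $E=\mathcal{A}$ the evaluation map gives a canonical isomorphism $\mathcal{H}om_{\mathcal{A}}(\mathcal{A},F)\cong F$, and after pulling back this becomes $f^{*}F\cong \mathcal{H}om_{f^{*}\mathcal{A}}(f^{*}\mathcal{A},f^{*}F)$, so $\theta$ is an isomorphism. Taking direct sums handles the case of a finite free $\mathcal{A}$-module $\mathcal{A}^{n}$. Second, since $E$ is coherent over the coherent sheaf of algebras $\mathcal{A}$, after shrinking $W$ we may choose a presentation
\begin{equation*}
\mathcal{A}^{m}\longrightarrow \mathcal{A}^{n}\longrightarrow E\longrightarrow 0.
\end{equation*}

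Applying $f^{*}$, which is exact because $f$ is flat, yields a presentation
\begin{equation*}
(f^{*}\mathcal{A})^{m}\longrightarrow (f^{*}\mathcal{A})^{n}\longrightarrow f^{*}E\longrightarrow 0
\end{equation*}
of $f^{*}E$ as an $f^{*}\mathcal{A}$-module. Applying $\mathcal{H}om_{\mathcal{A}}(-,F)$ to the first presentation and then $f^{*}$, using once more that $f^{*}$ is exact, yields a left-exact sequence
\begin{equation*}
0\longrightarrow f^{*}\mathcal{H}om_{\mathcal{A}}(E,F)\longrightarrow (f^{*}F)^{n}\longrightarrow (f^{*}F)^{m},
\end{equation*}
while applying $\mathcal{H}om_{f^{*}\mathcal{A}}(-,f^{*}F)$ to the pulled-back presentation produces exactly the same sequence. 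Naturality of $\theta$ ensures the two sequences fit into a commutative diagram in which $\theta$ is compatible with the canonical isomorphisms of the free terms, so the five lemma (or simply a kernel comparison) forces $\theta$ to be an isomorphism.

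The only delicate point is flatness: the exactness of $f^{*}$ is used twice, once to preserve the presentation of $E$ after pulling back, and once to commute $f^{*}$ past the left-exact functor $\mathcal{H}om_{\mathcal{A}}(-,F)$. Everything else is formal naturality, so I do not expect a genuine obstacle; the argument is the standard flat base-change statement for $\mathcal{H}om$, adapted from $\mathcal{O}_W$-modules to $\mathcal{A}$-modules by observing that the Azumaya algebra plays no special role beyond providing the coefficient ring for the local free presentation.
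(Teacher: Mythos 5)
Your proof is correct, but it takes a genuinely different route from the paper's. The paper also starts from the natural comparison morphism $f^{*}\mathcal{H}om_{\mathcal{A}}(E,F)\to\mathcal{H}om_{f^{*}\mathcal{A}}(f^{*}E,f^{*}F)$, but then exploits the Azumaya structure: it passes to a faithfully flat \'etale cover trivializing $\mathcal{A}$, uses Morita equivalence for $\mathcal{A}\cong\mathcal{E}nd_W(P)$ to reduce to the case $\mathcal{A}=\mathcal{O}_W$, and finally cites the flat base-change statement for $\mathcal{H}om$ of coherent sheaves from EGA. You instead rerun the proof of that EGA statement directly with $\mathcal{A}$ as the coefficient ring: local free presentation $\mathcal{A}^{m}\to\mathcal{A}^{n}\to E\to 0$, exactness of $f^{*}$, left-exactness of $\mathcal{H}om$, and a kernel comparison. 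Your argument is more self-contained and strictly more general --- it never uses that $\mathcal{A}$ is Azumaya, only that it is a coherent sheaf of $\mathcal{O}_W$-algebras on a noetherian scheme (needed so the kernel of $\mathcal{A}^{n}\to E$ is again finitely generated). It also sidesteps a mild delicacy in the paper's reduction, namely that checking an isomorphism after a faithfully flat base change requires knowing the comparison map itself is compatible with that base change. What the paper's approach buys is brevity and a template (\'etale trivialization plus Morita) that it reuses elsewhere, e.g.\ in Lemma \ref{simple}. One cosmetic remark: it is cleaner to say the isomorphism question is local on $Z$ (the presentation of $E$ over an open cover of $W$ pulls back to a cover of $Z$); ``local on $W$'' is what you actually use and is fine, but worth phrasing carefully.
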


\begin{proof}
First we note that by \cite[0.4.4.6]{gro} there is a natural morphism
\begin{equation*}
f^{*}{\mathcal{H}om_{\mathcal{A}}(E,F)}\rightarrow \mathcal{H}om_{f^{*}\mathcal{A}}(f^{*}E,f^{*}F).
\end{equation*}
So after a faithfully flat \'{e}tale base change we may assume that $\mathcal{A}$ is trivial. Then Morita equivalence for $\mathcal{A}=\mathcal{E}nd_W(P)$ reduces this problem to the case $\mathcal{A}=\mathcal{O}_W$. Now the lemma follows from \cite[0.6.7.6]{gro} since $f$ is flat by assumption.
\end{proof}
 
\begin{lem}\label{hom}
Assume $E$ and $F$ are $\mathcal{A}$-modules, then 
\begin{equation*}
\Hom_{\overline{\mathcal{A}}}(\overline{E},\overline{F})\cong \Hom_{\mathcal{A}}(E,F)\oplus \Hom_{\mathcal{A}}(E,F\otimes L).
\end{equation*}
\end{lem}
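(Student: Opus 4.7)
The plan is to reduce the statement to a computation with sheaf-$\mathcal{H}om$ on $W$, by passing from global $\Hom$ on $\overline{W}$ down to $W$ via the adjunction $(q^{*},q_{*})$ and the projection formula.

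First, since $q$ is étale (hence flat), Lemma \ref{hompull} applied to $f=q$ gives a canonical isomorphism
\begin{equation*}
\mathcal{H}om_{\overline{\mathcal{A}}}(\overline{E},\overline{F})\cong q^{*}\mathcal{H}om_{\mathcal{A}}(E,F).
\end{equation*}
Taking global sections on $\overline{W}$ and using the adjunction together with the projection formula I would then write
\begin{equation*}
\Hom_{\overline{\mathcal{A}}}(\overline{E},\overline{F})
=H^{0}\bigl(\overline{W},q^{*}\mathcal{H}om_{\mathcal{A}}(E,F)\bigr)
=H^{0}\bigl(W,\mathcal{H}om_{\mathcal{A}}(E,F)\otimes q_{*}\mathcal{O}_{\overline{W}}\bigr).
\end{equation*}

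Next, I would substitute $q_{*}\mathcal{O}_{\overline{W}}\cong \mathcal{O}_W\oplus L$ and distribute the tensor product to split the right-hand side as
\begin{equation*}
H^{0}\bigl(W,\mathcal{H}om_{\mathcal{A}}(E,F)\bigr)\oplus H^{0}\bigl(W,\mathcal{H}om_{\mathcal{A}}(E,F)\otimes L\bigr).
\end{equation*}
The first summand is $\Hom_{\mathcal{A}}(E,F)$ by definition. For the second summand I would use the natural identification $\mathcal{H}om_{\mathcal{A}}(E,F)\otimes L\cong \mathcal{H}om_{\mathcal{A}}(E,F\otimes L)$, which holds because $L$ is an invertible $\mathcal{O}_W$-module and the $\mathcal{A}$-action on $F\otimes L$ is defined through the first factor. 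Taking global sections then yields the remaining $\Hom_{\mathcal{A}}(E,F\otimes L)$, giving the desired decomposition.

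The only real subtlety is making sure that these identifications are compatible with the left $\mathcal{A}$-module structures throughout, rather than just with the underlying $\mathcal{O}_W$-structures. The step that requires the most care is the projection formula: one has to check that the $\mathcal{O}_W$-linear isomorphism $q_{*}q^{*}\mathcal{H}om_{\mathcal{A}}(E,F)\cong \mathcal{H}om_{\mathcal{A}}(E,F)\otimes q_{*}\mathcal{O}_{\overline{W}}$ respects the $\mathcal{A}$-module structures, but since $\mathcal{H}om_{\mathcal{A}}(E,F)$ is an $\mathcal{O}_W$-module and the projection formula is natural, this is automatic. Everything else is formal once Lemma \ref{hompull} is in hand.
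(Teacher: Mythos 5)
Your argument is correct and follows essentially the same route as the paper: both reduce to sheaf-$\mathcal{H}om$ via Lemma \ref{hompull}, apply the projection formula for the finite morphism $q$, split $q_{*}\mathcal{O}_{\overline{W}}\cong\mathcal{O}_W\oplus L$, and identify $\mathcal{H}om_{\mathcal{A}}(E,F)\otimes L$ with $\mathcal{H}om_{\mathcal{A}}(E,F\otimes L)$ before taking global sections. No gaps.
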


\begin{proof}
By the previous Lemma \ref{hompull} we have an isomorphism
\begin{equation*}
\mathcal{H}om_{\overline{\mathcal{A}}}(\overline{E},\overline{F})\cong \overline{\mathcal{H}om_{\mathcal{A}}(E,F)}.
\end{equation*}
This lemma is then a consequence of the following chain of isomorphisms, where the third line uses the projection formula for finite morphisms, \cite[Lemma 5.7]{ara}:
\begin{align*}
q_{*}\mathcal{H}om_{\overline{\mathcal{A}}}(\overline{E},\overline{F})&\cong q_{*}\overline{\mathcal{H}om_{\mathcal{A}}(E,F)}\\
&=q_{*}q^{*}\mathcal{H}om_{\mathcal{A}}(E,F)\\
&\cong \mathcal{H}om_{\mathcal{A}}(E,F)\otimes q_{*}\mathcal{O}_{\overline{W}}\\
&\cong \mathcal{H}om_{\mathcal{A}}(E,F)\oplus \mathcal{H}om_{\mathcal{A}}(E,F\otimes L).
\end{align*}
\end{proof}

\begin{cor}\label{homvan}
Assume $E$ is an $\mathcal{A}$-module. If $\overline{E}$ is a simple $\overline{\mathcal{A}}$-module, then $E$ is a simple $\mathcal{A}$-module and $\Hom_{\mathcal{A}}(E,E\otimes L)=0$.
\end{cor}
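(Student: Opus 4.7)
The plan is to deduce everything from Lemma \ref{hom} applied to the pair $(E,E)$. Specializing $F=E$ in the statement there yields the direct sum decomposition
\begin{equation*}
\Hom_{\overline{\mathcal{A}}}(\overline{E},\overline{E})\cong \Hom_{\mathcal{A}}(E,E)\oplus \Hom_{\mathcal{A}}(E,E\otimes L),
\end{equation*}
so the corollary reduces to reading off dimensions on each side.

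On the left, the hypothesis that $\overline{E}$ is simple means $\End_{\overline{\mathcal{A}}}(\overline{E})\cong \mathbb{C}$, so the left-hand side has dimension exactly one. On the right, the identity $\id_E$ is a nonzero element of $\Hom_{\mathcal{A}}(E,E)$, so this summand has dimension at least one. Since both summands are finite-dimensional $\mathbb{C}$-vector spaces, the decomposition forces $\dim \Hom_{\mathcal{A}}(E,E)=1$ and $\dim \Hom_{\mathcal{A}}(E,E\otimes L)=0$. The first equality says $E$ is a simple $\mathcal{A}$-module and the second is the desired vanishing.

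The only point needing any care is the identification of simplicity with the condition $\End=\mathbb{C}$, but this is the standard convention in the moduli setting used throughout the paper, so there is no serious obstacle. The argument is essentially purely formal once Lemma \ref{hom} is in hand, and the role of the identity morphism — which breaks the symmetry between the two summands on the right — is the only conceptual input.
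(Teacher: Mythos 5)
Your argument is correct and coincides with the paper's own proof: both apply Lemma \ref{hom} with $F=E$, use $\End_{\overline{\mathcal{A}}}(\overline{E})\cong\mathbb{C}$ from simplicity of $\overline{E}$, and observe that $\id_E$ forces the first summand to be all of $\mathbb{C}$ and the second to vanish. Nothing further is needed.
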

\begin{proof}
As $\overline{E}$ is a simple $\overline{\mathcal{A}}$-module, we have $\End_{\overline{\mathcal{A}}}(\overline{E})\cong\mathbb{C}$. Lemma \ref{hom} gives 
\begin{equation*}
\End_{\overline{\mathcal{A}}}(\overline{E})\cong\End_{\mathcal{A}}(E)\oplus\Hom_{\mathcal{A}}(E,E\otimes L)
\end{equation*}
and as $\id_E\in \End_{\mathcal{A}}(E)$ we find $\End_{\mathcal{A}}(E)\cong\mathbb{C}$ and $\Hom_{\mathcal{A}}(E,E\otimes L)=0$.
\end{proof}

\begin{prop}\cite[Proposition 3.5.]{hoff}\label{serre}
Assume $E$ and $F$ are $\mathcal{A}$-modules, then there is the following variant of Serre duality:
\begin{equation*}
\Ext^i_{\mathcal{A}}(E,F)\cong \left( \Ext^{d-i}_{\mathcal{A}}(F,E\otimes\omega_{W})\right) ^{\vee}.
\end{equation*}
\end{prop}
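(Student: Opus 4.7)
Plan: The approach is to reduce to classical Grothendieck--Serre duality on $W$ using the Morita-equivalence structure of an Azumaya algebra. I would start from the standard fact
\[
\Ext^i_{\mathcal{A}}(E,F)=\mathbb{H}^i(W,R\mathcal{H}om_{\mathcal{A}}(E,F))
\]
and apply Grothendieck--Serre duality on the smooth projective variety $W$ to the bounded coherent complex $R\mathcal{H}om_{\mathcal{A}}(E,F)$ to obtain
\[
\Ext^i_{\mathcal{A}}(E,F)^{\vee}\cong\mathbb{H}^{d-i}\bigl(W,R\mathcal{H}om_{\mathcal{O}_W}(R\mathcal{H}om_{\mathcal{A}}(E,F),\omega_W)\bigr).
\]
The remaining task is then a local identification
\[
R\mathcal{H}om_{\mathcal{O}_W}(R\mathcal{H}om_{\mathcal{A}}(E,F),\omega_W)\;\simeq\;R\mathcal{H}om_{\mathcal{A}}(F,E\otimes\omega_W).
\]

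To produce this isomorphism I would construct a natural global map built from the composition/evaluation pairing together with the reduced-trace identification $\mathcal{A}\cong\mathcal{A}^{\vee}$ (available because $\mathcal{A}$ is Azumaya, hence Frobenius over $\mathcal{O}_W$), and then verify it is a quasi-isomorphism étale-locally. After a faithfully flat étale cover trivializing $\mathcal{A}\cong\mathcal{E}nd_{\mathcal{O}}(P)$, Morita equivalence translates left $\mathcal{A}$-modules $E,F$ into $\mathcal{O}$-modules $E',F'$ and identifies $R\mathcal{H}om_{\mathcal{A}}(-,-)$ with $R\mathcal{H}om_{\mathcal{O}}(-,-)$; the desired statement then reduces to the classical biduality
\[
R\mathcal{H}om_{\mathcal{O}}\bigl(R\mathcal{H}om_{\mathcal{O}}(E',F'),\omega_W\bigr)\cong R\mathcal{H}om_{\mathcal{O}}(F',E'\otimes\omega_W),
\]
which holds because every coherent sheaf on the smooth variety $W$ is perfect. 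Faithfully flat descent then upgrades this étale-local statement to a global quasi-isomorphism on $W$.

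Combining the two ingredients and taking $\mathbb{H}^{d-i}(W,-)$ on both sides yields the claimed Serre duality
\[
\Ext^i_{\mathcal{A}}(E,F)\cong\Ext^{d-i}_{\mathcal{A}}(F,E\otimes\omega_W)^{\vee}.
\]
The main obstacle I anticipate lies in the middle step: producing the canonical global comparison map, rather than merely an abstract local isomorphism, and tracking the compatibility of the left and right $\mathcal{A}$-module structures with the trace pairing and the twist by $\omega_W$. Once the map is pinned down, the fact that it is a quasi-isomorphism follows easily by Morita reduction; it is the naturality and globality of the construction that require the most care.
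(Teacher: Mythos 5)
The paper offers no proof of this proposition: it is quoted directly from Hoffmann--Stuhler \cite[Proposition 3.5]{hoff}, so there is no internal argument to compare yours against. That said, your outline is sound and is essentially the standard derivation, close in spirit to the cited source, which constructs the pairing $\Ext^i_{\mathcal{A}}(E,F)\times\Ext^{d-i}_{\mathcal{A}}(F,E\otimes\omega_W)\to\Ext^d_{\mathcal{A}}(E,E\otimes\omega_W)\xrightarrow{\tr_{\mathcal{A}}}H^d(W,\omega_W)\cong\mathbb{C}$ by Yoneda composition and proves nondegeneracy by reduction to classical Serre duality. Two points in your sketch deserve to be made explicit rather than left as anticipated obstacles. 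First, the fact that the Serre functor is plainly $-\otimes\omega_W[d]$ with no further twist is precisely the self-duality $\mathcal{A}\cong\mathcal{A}^{\vee}$ as an $\mathcal{A}$-bimodule via the reduced trace pairing; for a general sheaf of $\mathcal{O}_W$-algebras of finite homological dimension the dualizing bimodule $\mathcal{A}^{\vee}$ would appear in the formula, so your appeal to the Azumaya (Frobenius) property is the crux, not a technicality. Second, the global comparison map requires a trace out of $R\mathcal{H}om_{\mathcal{A}}(E,E)\otimes\omega_W$, hence that $E$ be perfect as an $\mathcal{A}$-module; this holds because $\mathcal{A}$ is \'etale-locally Morita-equivalent to $\mathcal{O}_W$ on a smooth variety, so every coherent $\mathcal{A}$-module admits a finite resolution by locally projective $\mathcal{A}$-modules (surject from $\mathcal{A}(-n)^{\oplus N}$ for $n\gg 0$ and bound the length locally by Auslander--Buchsbaum). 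With those two ingredients pinned down, your reduction via Grothendieck--Serre duality and \'etale-local Morita equivalence goes through.
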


We assume now furthermore that $\dim W=2$. Denote the $\mathcal{O}_W$-double dual of $E$ by $E^{**}$.

\begin{lem}\label{double2}
Assume $E$ is an $\mathcal{A}$-module which is torsion free as an $\mathcal{O}_W$-module. If $\overline{E^{**}}$ is a simple $\overline{\mathcal{A}}$-module, then
\begin{equation*}
\Hom_{\mathcal{A}}(E,E^{**}\otimes L)=0.
\end{equation*}
\end{lem}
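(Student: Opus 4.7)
The plan is to reduce this to Corollary \ref{homvan} applied to $E^{**}$ by showing that any $\mathcal{A}$-linear morphism $\phi: E \to E^{**}\otimes L$ extends uniquely to an $\mathcal{A}$-linear morphism $\widetilde{\phi}: E^{**} \to E^{**}\otimes L$.

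First I would observe that $E^{**}$ carries a natural $\mathcal{A}$-module structure. Since $\mathcal{A}$ is locally free, the action map $\mathcal{A}\otimes E\to E$ gives, after double-dualization as $\mathcal{O}_W$-modules, a map $\mathcal{A}\otimes E^{**}\to E^{**}$ (using $\mathcal{A}^{**}\cong\mathcal{A}$), and the canonical map $\eta_E: E\to E^{**}$ is then $\mathcal{A}$-linear. Next, since $W$ is a smooth surface, every reflexive $\mathcal{O}_W$-module is locally free; in particular $E^{**}$ is locally free, and thus so is $E^{**}\otimes L$, which therefore coincides with its own double dual.

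Given an $\mathcal{A}$-linear $\phi: E\to E^{**}\otimes L$, I would apply the double-dual functor $(-)^{**}$ to obtain a commutative square
\begin{equation*}
\begin{CD}
E @>\phi>> E^{**}\otimes L \\
@V\eta_E VV @VV\eta_{E^{**}\otimes L}V \\
E^{**} @>\phi^{**}>> (E^{**}\otimes L)^{**}
\end{CD}
\end{equation*}
Because $E^{**}\otimes L$ is reflexive, the right vertical arrow is an isomorphism, so $\phi^{**}$ can be regarded as an $\mathcal{A}$-linear map $E^{**}\to E^{**}\otimes L$, and $\phi=\phi^{**}\circ \eta_E$. By hypothesis $\overline{E^{**}}$ is a simple $\overline{\mathcal{A}}$-module, so Corollary \ref{homvan} applied to the $\mathcal{A}$-module $E^{**}$ yields $\Hom_{\mathcal{A}}(E^{**}, E^{**}\otimes L)=0$. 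Hence $\phi^{**}=0$, and therefore $\phi = 0$.

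The main subtlety, and the only real content beyond the diagram chase, is verifying that the double-dual extension $\phi^{**}$ is $\mathcal{A}$-linear rather than only $\mathcal{O}_W$-linear; this is purely a naturality check, using that all three of $\eta_E$, $\phi$ and the $\mathcal{A}$-action maps are $\mathcal{O}_W$-linear and thus commute with $(-)^{**}$. Everything else is formal once one recognizes that the hypothesis on $\overline{E^{**}}$ is tailored precisely to feed into Corollary \ref{homvan} at the level of the reflexive hull.
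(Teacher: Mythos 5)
Your proof is correct, but it takes a genuinely different route from the paper's. You factor an arbitrary $\mathcal{A}$-linear $\phi\colon E\to E^{**}\otimes L$ through the reflexive hull by applying the $\mathcal{O}_W$-double-dual functor, using that $E^{**}\otimes L$ is reflexive (indeed locally free on a smooth surface) so that $\eta_{E^{**}\otimes L}$ is an isomorphism, and then kill the extension $\phi^{**}$ by quoting Corollary \ref{homvan} for the $\mathcal{A}$-module $E^{**}$. The paper instead works homologically: it applies $\Hom_{\mathcal{A}}(-,E^{**})$ to the sequence $0\to E\to E^{**}\to T\to 0$ and proves $\Hom_{\mathcal{A}}(T,E^{**})=\Ext^1_{\mathcal{A}}(T,E^{**})=0$ (the latter via Serre duality, Proposition \ref{serre}, and the local-to-global spectral sequence, since $T$ is $0$-dimensional and $E^{**}$ is locally projective) to get $\Hom_{\mathcal{A}}(E,E^{**})\cong\End_{\mathcal{A}}(E^{**})$; it then concludes by a dimension count in the decomposition of Lemma \ref{hom}, using that $\End_{\overline{\mathcal{A}}}(\overline{E^{**}})\cong\mathbb{C}$ while $\Hom_{\mathcal{A}}(E,E^{**})$ contains the canonical map. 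Your argument is more elementary --- it avoids Serre duality and the spectral sequence entirely and isolates the one real point, namely that restriction along $E\to E^{**}$ identifies $\Hom_{\mathcal{A}}(E^{**},E^{**}\otimes L)$ with $\Hom_{\mathcal{A}}(E,E^{**}\otimes L)$ --- at the cost of the naturality check that $(-)^{**}$ preserves $\mathcal{A}$-linearity, which you correctly flag and which does go through since $\mathcal{A}$ is locally free. The paper's homological machinery is the one that generalizes to the deformation-theoretic statements it needs later (e.g.\ the $\Ext^2$ vanishing in Theorem \ref{thm2}\,ii)), which is presumably why it is set up that way here.
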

\begin{proof}
We first observe that there is an isomorphism
\begin{equation*}
 \End_{\mathcal{A}}(E^{**}) \cong \Hom_{\mathcal{A}}(E,E^{**}).
\end{equation*}
To see this, we note that there is an exact sequence of $\mathcal{A}$-modules
\begin{equation}\label{double}
\begin{tikzcd}
0 \arrow{r} & E \arrow{r} & E^{**} \arrow{r} & T \arrow{r} & 0 
\end{tikzcd}
\end{equation}
with $\dim\supp(T)=0$ as $E$ is torsion free and $\dim W=2$. It is known that $E^{**}$ is a locally free $\mathcal{O}_W$-module, hence a locally projective $\mathcal{A}$-module. This immediately implies $\Hom_{\mathcal{A}}(T,E^{**})=0$ since $T$ is torsion. Furthermore this also shows $\Ext^1_{\mathcal{A}}(T,E^{**})=0$ by using Proposition \ref{serre}, the local-to-global spectral sequence and the fact that $T$ is supported in dimension zero.
Applying $\Hom_{\mathcal{A}}(-,E^{**})$ to (\ref{double}) and using the vanishing results gives the desired isomorphism.

Using the same argument for $\overline{E}$ shows that we also have an isomorphism
\begin{equation*}
 \End_{\overline{\mathcal{A}}}(\overline{E^{**}}) \cong \Hom_{\overline{\mathcal{A}}}(\overline{E},\overline{E^{**}})
\end{equation*}
since $\overline{E^{**}}\cong\overline{E}^{**}$ by \cite[0.6.7.6.]{gro}.

We can now conclude as follows: by Lemma \ref{hom} we have
\begin{equation*}
\Hom_{\overline{\mathcal{A}}}(\overline{E},\overline{E^{**}})\cong \Hom_{\mathcal{A}}(E,E^{**})\oplus \Hom_{\mathcal{A}}(E,E^{**}\otimes L).
\end{equation*}
As $\overline{E^{**}}$ is simple then by the previous observation and Corollary \ref{homvan} we get
\begin{equation*}
\Hom_{\overline{\mathcal{A}}}(\overline{E},\overline{E^{**}})\cong\mathbb{C}\,\,\,\text{and}\,\,\, \Hom_{\mathcal{A}}(E,E^{**})\cong\mathbb{C}.
\end{equation*}
\end{proof}

\section{Noncommutative descent}\label{2}
We use the same notation as in the previous section. We have the \'{e}tale Galois double cover $q: \overline{W} \rightarrow W$ with $\text{Aut}(\overline{W}/W)$ generated by the covering involution $\iota$:
\begin{equation*}
\begin{tikzcd}
\overline{W}\arrow{rr}{\iota}\arrow{rd}[swap]{q} & & \overline{W}\arrow{ld}{q} \\
& W 
\end{tikzcd}  
\end{equation*}

\begin{defi}
We say a coherent sheaf $F$ of $\mathcal{O}_{\overline{W}}$-modules on $\overline{W}$ \emph{descends} to $W$, if there is a coherent sheaf $E$ of $\mathcal{O}_W$-modules on $W$ together with an isomorphism $F\cong\overline{E}$.
\end{defi}

Since $q:\overline{W}\rightarrow W$ is an \'{e}tale Galois double cover with $\Aut(\overline{W}/W)=\left\langle \iota \right\rangle \cong\mathbb{Z}/2\mathbb{Z}$ the descent condition for a coherent sheaf $F$ on $\overline{W}$, see   \cite[\href{https://stacks.math.columbia.edu/tag/0D1V}{Lemma 0D1V}]{stacks-project}, reduces to the existence of an isomorphism $\varphi_{\iota}: F\rightarrow \iota^{*}F$ such that (using $\varphi_{\iota^2}=\id$):
\begin{equation}\label{desccond}
\iota^{*}\varphi_{\iota}\circ\varphi_{\iota}=\id.
\end{equation}
But we have $\iota^{*}\varphi_{\iota}\circ\varphi_{\iota} : F\rightarrow \iota^{*}\iota^{*}F\cong F$. So, for example, if $F$ is simple, then any isomorphism $\varphi_{\iota}$ satisfies $\iota^{*}\varphi_{\iota}\circ\varphi_{\iota}\in \End_{\overline{W}}(F)=\mathbb{C}\cdot\id_F$. Hence after multiplication with an appropriate scalar, $\varphi_{\iota}$ satisfies (\ref{desccond}) and $F$ descends. Summing up:
\begin{prop}
Assume $F$ is a simple coherent $\mathcal{O}_{\overline{W}}$-module on $\overline{W}$ together with an isomorphism $F\cong\iota^{*}F$, then $F$ descends to $W$.
\end{prop}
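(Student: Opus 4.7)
The proposition is essentially a formalization of the discussion preceding it, so the plan is to carry out carefully the scalar adjustment sketched there. Starting from the given isomorphism $\varphi_{\iota}\colon F\to \iota^{*}F$, I would form the composition $\iota^{*}\varphi_{\iota}\circ \varphi_{\iota}$ and, using the canonical identification $\iota^{*}\iota^{*}F\cong F$ coming from $\iota^{2}=\id_{\overline{W}}$, view it as an element of $\End_{\overline{W}}(F)$.

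Since $F$ is simple by hypothesis, $\End_{\overline{W}}(F)=\mathbb{C}\cdot \id_F$, so there is a scalar $\lambda\in\mathbb{C}$ with $\iota^{*}\varphi_{\iota}\circ \varphi_{\iota}=\lambda\cdot \id_F$. The key observation is that $\lambda\neq 0$: both $\varphi_{\iota}$ and its pullback $\iota^{*}\varphi_{\iota}$ are isomorphisms, hence so is their composition, which therefore cannot be the zero endomorphism.

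Now choose $\mu\in\mathbb{C}^{*}$ with $\mu^{2}=\lambda^{-1}$ and replace $\varphi_{\iota}$ by $\widetilde{\varphi}_{\iota}:=\mu\,\varphi_{\iota}$. Because $\iota^{*}$ is $\mathbb{C}$-linear, $\iota^{*}\widetilde{\varphi}_{\iota}=\mu\,\iota^{*}\varphi_{\iota}$, so
\begin{equation*}
\iota^{*}\widetilde{\varphi}_{\iota}\circ \widetilde{\varphi}_{\iota}=\mu^{2}\lambda\cdot \id_F=\id_F,
\end{equation*}
which is exactly the cocycle condition (\ref{desccond}). Applying the Stacks project descent result cited immediately before the proposition to the pair $(F,\widetilde{\varphi}_{\iota})$ then produces a coherent $\mathcal{O}_W$-module $E$ on $W$ with $\overline{E}=q^{*}E\cong F$, proving that $F$ descends.

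The only point requiring any care, and thus the sole potential obstacle, is making sure the identification $\iota^{*}\iota^{*}F\cong F$ is the one implicit in the formulation of the descent condition, so that the scalar rescaling really does convert $\lambda\cdot\id_F$ into $\id_F$; once this is unwound the argument reduces to the nonvanishing of $\lambda$ and the existence of a square root in $\mathbb{C}$, both of which are automatic.
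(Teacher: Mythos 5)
Your argument is correct and is essentially identical to the paper's own proof, which likewise observes that for simple $F$ the composite $\iota^{*}\varphi_{\iota}\circ\varphi_{\iota}$ lies in $\End_{\overline{W}}(F)=\mathbb{C}\cdot\id_F$ and then rescales $\varphi_{\iota}$ by a square root to achieve the cocycle condition before invoking \'{e}tale descent. The only difference is that you spell out the nonvanishing of the scalar and the choice of $\mu$ explicitly, which the paper leaves implicit.
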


In the rest of this section we want to prove a similar results for $\overline{\mathcal{A}}$-modules on $\overline{W}$. For this we need some notation: let $p: Y\rightarrow W$ be the Brauer-Severi variety of $\mathcal{A}$, see \cite{reede} for more information. By functoriality the Brauer-Severi variety $\overline{p}: \overline{Y}\rightarrow \overline{W}$ of $\overline{\mathcal{A}}$ is given by $\overline{Y}=Y\times_W\overline{W}$ and thus $\overline{q}: \overline{Y}\rightarrow Y$ is also an \'{e}tale Galois double cover with covering involution $\overline{\iota}$. All this fits in to the following diagram with both squares cartesian:
\begin{equation}\label{diag}
\begin{tikzcd}
\overline{Y} \arrow{r}{\overline{\iota}}\arrow{d}[swap]{\overline{p}} & \overline{Y} \arrow{r}{\overline{q}}\arrow{d}[swap]{\overline{p}} & Y \arrow{d}{p}\\
\overline{W} \arrow{r}{\iota} & \overline{W} \arrow{r}{q} & W
\end{tikzcd}
\end{equation}

The Brauer-Severi variety of $\mathcal{A}$ has the property that $\mathcal{A}_Y:=p^{*}\mathcal{A}$ is split, more exactly we have
\begin{equation*}
\mathcal{A}_Y^{op}\cong \mathcal{E}nd_Y(G)
\end{equation*}
for a locally free sheaf $G$ on $Y$, which can be described explicitly, see \cite[Remark 1.8]{reede}. 

In the following we will frequently use, without further mention, the fact that a coherent left $\mathcal{A}$-module is the same as a coherent right $\mathcal{A}^ {op}$-module. Denote these isomorphic categories by $\Coh_l(W,\mathcal{A})$ and $\Coh_r(W,\mathcal{A}^{op})$ respectively.

We also define
\begin{equation*}
\Coh(Y,W)=\left\lbrace E\in \Coh(Y)\,|\, p^{*}p_{*}(E\otimes G^{*})\xrightarrow{\,\cong\,} E\otimes G^{*} \right\rbrace.
\end{equation*}
Then by \cite[Lemma 1.10]{reede} we have the following equivalences
\begin{align*}
\phi: \Coh_r(W,\mathcal{A}^{op}) \rightarrow \Coh(Y,W),\,\,\, & E\mapsto p^{*}E\otimes_{\mathcal{A}_Y^{op}}G\\
\psi: \Coh(Y,W) \rightarrow \Coh_r(W,\mathcal{A}^{op}),\,\,\, & E\mapsto p_{*}(E\otimes G^{*}) 
\end{align*}
We have similar equivalences $\overline{\phi}$ and $\overline{\psi}$ involving $\overline{\mathcal{A}}_{\overline{Y}}^{op}\cong \mathcal{E}nd_{\overline{Y}}(\overline{q}^{*}G)$, $\overline{Y}$ and $\overline{W}$.

\begin{lem}\label{simple}
Assume $F$ is an $\overline{\mathcal{A}}$-module, then
\begin{equation*}
\End_{\overline{\mathcal{A}}}(F)\cong \End_{\overline{Y}}(\overline{\phi}(F)).
\end{equation*}
\end{lem}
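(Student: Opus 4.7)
The plan is to deduce the statement formally from the equivalence of categories $\overline{\phi}:\Coh_r(\overline{W},\overline{\mathcal{A}}^{op})\to \Coh(\overline{Y},\overline{W})$ already established in \cite[Lemma 1.10]{reede}, together with the observation that $\Coh(\overline{Y},\overline{W})$ is by construction a \emph{full} subcategory of $\Coh(\overline{Y})$.

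First, I would use the identification of $\Coh_l(\overline{W},\overline{\mathcal{A}})$ with $\Coh_r(\overline{W},\overline{\mathcal{A}}^{op})$ that is recalled right before the statement of the lemma: endomorphisms of $F$ as a left $\overline{\mathcal{A}}$-module agree with endomorphisms of $F$ regarded as a right $\overline{\mathcal{A}}^{op}$-module. Second, because $\overline{\phi}$ is an equivalence of categories, it induces bijections on Hom-sets, so
\begin{equation*}
\End_{\overline{\mathcal{A}}}(F)\cong \End_{\Coh(\overline{Y},\overline{W})}(\overline{\phi}(F)).
\end{equation*}

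Finally, since $\Coh(\overline{Y},\overline{W})$ is defined by imposing a condition only on objects (the adjunction unit $\overline{p}^{*}\overline{p}_{*}(-\otimes\overline{q}^{*}G^{*})\to -\otimes \overline{q}^{*}G^{*}$ being an isomorphism) without any restriction on morphisms, it is a full subcategory of $\Coh(\overline{Y})$. Consequently
\begin{equation*}
\End_{\Coh(\overline{Y},\overline{W})}(\overline{\phi}(F))=\End_{\overline{Y}}(\overline{\phi}(F)),
\end{equation*}
and chaining the two identifications yields the claim.

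There is no real obstacle here: all substantive content is already packaged into the equivalence $\overline{\phi}$ from \cite[Lemma 1.10]{reede}, and the lemma amounts to unwinding that equivalence together with the fullness of the inclusion $\Coh(\overline{Y},\overline{W})\hookrightarrow \Coh(\overline{Y})$. The one point worth stating explicitly in the written proof is that the fullness lets one pass from endomorphisms in the defined subcategory to endomorphisms as an $\mathcal{O}_{\overline{Y}}$-module, since it is this $\mathcal{O}_{\overline{Y}}$-linear endomorphism ring that appears on the right-hand side of the isomorphism.
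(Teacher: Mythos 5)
Your argument is correct, but it is not the route the paper takes. You deduce the isomorphism purely formally: $\overline{\phi}$ is an equivalence onto $\Coh(\overline{Y},\overline{W})$ by \cite[Lemma 1.10]{reede}, hence fully faithful, and $\Coh(\overline{Y},\overline{W})$ is a full subcategory of $\Coh(\overline{Y})$ because its defining condition constrains only objects; chaining these with the identification of left $\overline{\mathcal{A}}$-modules with right $\overline{\mathcal{A}}^{op}$-modules gives the claim, and since equivalences preserve composition and identities the resulting bijection of endomorphism sets is a ring isomorphism. The paper instead proves a stronger, sheaf-level statement: it shows $\mathcal{E}nd_{\overline{\mathcal{A}}}(F)\cong\overline{p}_{*}\mathcal{E}nd_{\mathcal{O}_{\overline{Y}}}(\overline{\phi}(F))$ by combining the isomorphism $\mathcal{E}nd_{\overline{\mathcal{A}}^{op}}(F)\cong\overline{p}_{*}\overline{p}^{*}\mathcal{E}nd_{\overline{\mathcal{A}}^{op}}(F)$ from \cite[Lemma 1.6]{reede} (the unit of adjunction is an isomorphism because the fibers of the Brauer--Severi fibration $\overline{p}$ have trivial global functions), compatibility of sheaf Hom with the flat pullback $\overline{p}^{*}$ (Lemma \ref{hompull}), and sheafified Morita equivalence for $\overline{\mathcal{A}}_{\overline{Y}}^{op}\cong\mathcal{E}nd_{\overline{Y}}(\overline{q}^{*}G)$; the lemma then follows by taking global sections. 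Your version is shorter and outsources all content to the equivalence of \cite[Lemma 1.10]{reede}, including the fact that $\overline{\phi}(F)$ actually lands in $\Coh(\overline{Y},\overline{W})$; the paper's version needs only the explicit formula for $\overline{\phi}$, not that it is an equivalence, and produces a local isomorphism of sheaves that is in principle reusable elsewhere. Both are valid proofs of the stated lemma.
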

\begin{proof}
Using $\mathcal{E}nd_{\overline{\mathcal{A}}}(F) = \mathcal{E}nd_{\overline{\mathcal{A}}^{op}}(F)$, the following chain of isomorphisms gives the result:
\begin{align*}
\mathcal{E}nd_{\overline{\mathcal{A}}^{op}}(F)&\cong \overline{p}_{*}\overline{p}^{*}\mathcal{E}nd_{\overline{\mathcal{A}}^{op}}(F) &&\text{by \cite[Lemma 1.6]{reede}}\\
&\cong\overline{p}_{*}\mathcal{E}nd_{\overline{\mathcal{A}}_{\overline{Y}}^{op}}(\overline{p}^{*}F)&&\text{by Lemma \ref{hompull}}\\
&\cong \overline{p}_{*}\mathcal{E}nd_{\mathcal{O}_{\overline{Y}}}(\overline{p}^{*}F\otimes_{\overline{\mathcal{A}}_{\overline{Y}}^{op}}\overline{q}^{*}G)&&\text{by Morita equivalence}\\
&=\overline{p}_{*}\mathcal{E}nd_{\mathcal{O}_{\overline{Y}}}(\overline{\phi}(F)).
\end{align*}
\end{proof}

\begin{lem}\label{xtoy}
Assume $F$ is a $\overline{\mathcal{A}}$-module such that there is an isomorphism $F\cong\iota^{*}F$ of $\overline{\mathcal{A}}$-modules, then $ \overline{\phi}(F)\cong\overline{\iota}^{*}(\overline{\phi}(F))$ as $\mathcal{O}_{\overline{Y}}$-modules.
\end{lem}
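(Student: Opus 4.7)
The plan is to unfold the definition $\overline{\phi}(F)=\overline{p}^{*}F\otimes_{\overline{\mathcal{A}}_{\overline{Y}}^{op}}\overline{q}^{*}G$ and transport $\overline{\iota}^{*}$ through it using the cartesian diagram (\ref{diag}). The proof should be essentially formal; the content of the hypothesis $F\cong\iota^{*}F$ enters only at the very end.

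First I would extract the two key commutation relations from (\ref{diag}). Since the right square is cartesian with $\iota$ the covering involution of $q$, commutativity of the left square gives $\overline{p}\circ\overline{\iota}=\iota\circ\overline{p}$, and because $\overline{\iota}$ is the covering involution for the \'{e}tale double cover $\overline{q}:\overline{Y}\to Y$, we have $\overline{q}\circ\overline{\iota}=\overline{q}$. These yield the natural isomorphisms $\overline{\iota}^{*}\overline{p}^{*}\cong\overline{p}^{*}\iota^{*}$ and $\overline{\iota}^{*}\overline{q}^{*}\cong\overline{q}^{*}$ on coherent sheaves.

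Second, since $\overline{\iota}^{*}$ is an exact monoidal functor that respects tensor products over pullback algebras, I get
\begin{equation*}
\overline{\iota}^{*}\overline{\phi}(F)\;\cong\;\overline{\iota}^{*}\overline{p}^{*}F\otimes_{\overline{\iota}^{*}\overline{\mathcal{A}}_{\overline{Y}}^{op}}\overline{\iota}^{*}\overline{q}^{*}G.
\end{equation*}
Applying the identities from the first step and noting that $\iota^{*}\overline{\mathcal{A}}=\iota^{*}q^{*}\mathcal{A}\cong q^{*}\mathcal{A}=\overline{\mathcal{A}}$ (so that $\overline{\iota}^{*}\overline{\mathcal{A}}_{\overline{Y}}^{op}=\overline{\iota}^{*}\overline{p}^{*}\overline{\mathcal{A}}^{op}\cong\overline{p}^{*}\overline{\mathcal{A}}^{op}=\overline{\mathcal{A}}_{\overline{Y}}^{op}$) compatibly with the module structures, this rewrites as
\begin{equation*}
\overline{\iota}^{*}\overline{\phi}(F)\;\cong\;\overline{p}^{*}(\iota^{*}F)\otimes_{\overline{\mathcal{A}}_{\overline{Y}}^{op}}\overline{q}^{*}G\;=\;\overline{\phi}(\iota^{*}F).
\end{equation*}

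Third, I would invoke the hypothesis: the given isomorphism $F\cong\iota^{*}F$ of $\overline{\mathcal{A}}$-modules, together with functoriality of $\overline{\phi}$, yields $\overline{\phi}(\iota^{*}F)\cong\overline{\phi}(F)$, so that $\overline{\iota}^{*}\overline{\phi}(F)\cong\overline{\phi}(F)$ as $\mathcal{O}_{\overline{Y}}$-modules, as desired.

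The only mildly subtle point, rather than an obstacle, is the bookkeeping in the second step: one must check that the canonical isomorphism $\overline{\iota}^{*}\overline{\mathcal{A}}_{\overline{Y}}^{op}\cong\overline{\mathcal{A}}_{\overline{Y}}^{op}$ intertwines the $\overline{\iota}^{*}$-pullback of the right action on $\overline{q}^{*}G$ and the left action on $\overline{p}^{*}F$ with the original ones, so that the tensor product really is computed over the same algebra. This is a routine consequence of functoriality of Morita/pullback data along the cartesian squares in (\ref{diag}) and of the identity $q\iota=q$.
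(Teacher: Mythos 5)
Your proposal is correct and follows essentially the same route as the paper: pull $\overline{\iota}^{*}$ through the tensor product defining $\overline{\phi}(F)$ (the paper cites \cite[4.3.3]{gro} for this), use the commutation relations $\overline{\iota}^{*}\overline{p}^{*}\cong\overline{p}^{*}\iota^{*}$ and $\overline{\iota}^{*}\overline{q}^{*}\cong\overline{q}^{*}$ from diagram (\ref{diag}), and invoke $F\cong\iota^{*}F$ at the end. Your explicit remark about checking compatibility of the algebra identification $\overline{\iota}^{*}\overline{\mathcal{A}}_{\overline{Y}}^{op}\cong\overline{\mathcal{A}}_{\overline{Y}}^{op}$ with the module structures is a point the paper leaves implicit, but both arguments are the same computation.
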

\begin{proof}
There are the following isomorphisms:
\begin{align*}
\overline{\iota}^{*}(\overline{\phi}(F))&=\overline{\iota}^{*}(\overline{p}^{*}F\otimes_{\overline{\mathcal{A}}_{\overline{Y}}^{op}}\overline{q}^{*}G)\\
&\cong \overline{\iota}^{*}\overline{p}^{*}F\otimes_{\overline{\iota}^{*}\overline{\mathcal{A}}_{\overline{Y}}^{op}}\overline{\iota}^{*}\overline{q}^{*}G&&\text{by \cite[4.3.3]{gro}}\\
&\cong \overline{p}^{*}\iota^{*}F\otimes_{\overline{\mathcal{A}}_{\overline{Y}}^{op}}\overline{q}^{*}G&&\text{by (\ref{diag})}\\
&\cong \overline{p}^{*}F\otimes_{\overline{\mathcal{A}}_{\overline{Y}}^{op}}\overline{q}^{*}G\\
&=\overline{\phi}(F).
\end{align*}
\end{proof}

\begin{lem}\label{pull}
Assume $F$ is a $\overline{\mathcal{A}}$-module such that there is $M\in Coh(Y)$ with $\overline{\phi}(F)\cong \overline{q}^{*}M$, then $M\in \Coh(Y,W)$.
\end{lem}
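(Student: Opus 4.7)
The plan is to show the defining condition of $\Coh(Y,W)$ holds for $M$ by pulling the question back along the faithfully flat morphism $\overline{q}:\overline{Y}\rightarrow Y$ and using that the analogous condition holds for $\overline{\phi}(F)$ on $\overline{Y}$.

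First, since $\overline{\phi}$ is an equivalence with target $\Coh(\overline{Y},\overline{W})$, the module $\overline{\phi}(F)$ automatically lies in $\Coh(\overline{Y},\overline{W})$, i.e. the natural map
\begin{equation*}
\overline{p}^{*}\overline{p}_{*}\bigl(\overline{\phi}(F)\otimes \overline{q}^{*}G^{*}\bigr)\rightarrow \overline{\phi}(F)\otimes \overline{q}^{*}G^{*}
\end{equation*}
is an isomorphism. Substituting the given isomorphism $\overline{\phi}(F)\cong \overline{q}^{*}M$ and using $\overline{q}^{*}M\otimes \overline{q}^{*}G^{*}\cong \overline{q}^{*}(M\otimes G^{*})$, this becomes an isomorphism $\overline{p}^{*}\overline{p}_{*}\overline{q}^{*}(M\otimes G^{*})\xrightarrow{\cong}\overline{q}^{*}(M\otimes G^{*})$.

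Next I would apply flat base change to the right-hand cartesian square of diagram (\ref{diag}): since $q$ is flat (étale), there is a canonical isomorphism $\overline{p}_{*}\overline{q}^{*}\cong q^{*}p_{*}$. Combined with $\overline{p}^{*}q^{*}=(q\circ\overline{p})^{*}=(p\circ\overline{q})^{*}=\overline{q}^{*}p^{*}$, we obtain an isomorphism
\begin{equation*}
\overline{q}^{*}\bigl(p^{*}p_{*}(M\otimes G^{*})\bigr)\xrightarrow{\cong}\overline{q}^{*}(M\otimes G^{*}).
\end{equation*}
One should check that under these identifications this map coincides with the pullback along $\overline{q}$ of the canonical morphism $p^{*}p_{*}(M\otimes G^{*})\rightarrow M\otimes G^{*}$; this is a routine naturality check for the base-change isomorphism and the unit of the adjunction $p^{*}\dashv p_{*}$.

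Finally, since $\overline{q}$ is \'{e}tale and surjective, it is faithfully flat, so a morphism of coherent sheaves on $Y$ is an isomorphism if and only if its pullback along $\overline{q}$ is. Hence the canonical map $p^{*}p_{*}(M\otimes G^{*})\rightarrow M\otimes G^{*}$ is itself an isomorphism, which is exactly the defining condition for $M\in \Coh(Y,W)$. The main obstacle is the bookkeeping in the naturality check: one has to verify that the isomorphism obtained after composing the base-change and the $\overline{\phi}(F)\cong \overline{q}^{*}M$ identifications is indeed $\overline{q}^{*}$ applied to the canonical counit map, rather than some twisted variant — but this follows from the functoriality of all constructions involved.
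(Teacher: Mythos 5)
Your proposal is correct and follows essentially the same route as the paper: both reduce the defining condition for $M\in\Coh(Y,W)$ to the known condition for $\overline{\phi}(F)\in\Coh(\overline{Y},\overline{W})$ via the faithfully flat base change $\overline{q}$, using flat base change for the cartesian square $\overline{p}_{*}\overline{q}^{*}\cong q^{*}p_{*}$ and the identification $\overline{q}^{*}p^{*}\cong\overline{p}^{*}q^{*}$. The naturality bookkeeping you flag is implicit in the paper's chain of isomorphisms applied to the canonical morphism itself, so there is no gap.
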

\begin{proof}
We have to prove that the canonical morphism 
\begin{equation}\label{canon}
p^{*}p_{*}(M\otimes G^{*})\rightarrow M\otimes G^{*}
\end{equation}
is an isomorphism. It is enough to prove this after the faithfully flat base change $\overline{q}: \overline{Y}\rightarrow Y$:
\begin{alignat*}{3}
 & \overline{q}^{*}(p^{*}p_{*}(M\otimes G^{*}))&&\rightarrow \overline{q}^{*}(M\otimes G^{*})& &\\
\cong\,\,\,\,&  \overline{p}^{*}q^{*}p_{*}(M\otimes G^{*})&&\rightarrow \overline{q}^{*}M\otimes (\overline{q}^{*}G)^{*}& &\,\,\,\,\,\text{by (\ref{diag}) and \cite[0.6.7.6]{gro}}\\
 \cong\,\,\,\,&\overline{p}^{*}\overline{p}_{*}(\overline{q}^{*}M\otimes \overline{q}^{*}G^{*}))&&\rightarrow \overline{q}^{*}M\otimes (\overline{q}^{*}G)^{*}& &\,\,\,\,\,\text{by (\ref{diag}) and  \cite[\href{https://stacks.math.columbia.edu/tag/02KH}{Lemma 02KH}]{stacks-project}}\\
\cong\,\,\,\,& \overline{p}^{*}\overline{p}_{*}(\overline{\phi}(F)\otimes \overline{q}^{*}G^{*}))&&\rightarrow \overline{\phi}(F)\otimes (\overline{q}^{*}G)^{*}& &
\end{alignat*}
But $\overline{\phi}(F)\in \Coh(\overline{Y},\overline{W})$, so the last morphism is an isomorphism, hence so is (\ref{canon}).
\end{proof}

We can now prove the main result of this section:

\begin{thm}\label{desc}
Assume $F$ is a simple $\overline{\mathcal{A}}$-module with an isomorphism $F\cong \iota^{*}F$ of $\overline{\mathcal{A}}$-modules, then there is an $\mathcal{A}$-module $E$ and an isomorphism of $\overline{\mathcal{A}}$-modules $F\cong\overline{E}$. 
\end{thm}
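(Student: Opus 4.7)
The plan is to reduce the descent problem for the simple $\overline{\mathcal{A}}$-module $F$ to the descent problem for a simple coherent sheaf on the \'etale Galois double cover $\overline{q}:\overline{Y}\to Y$, which was already settled in the Proposition preceding the statement. On the Brauer–Severi side the algebra is Morita trivial, so the equivalence $\overline{\phi}$ converts $\overline{\mathcal{A}}$-modules into ordinary $\mathcal{O}_{\overline{Y}}$-modules and the classical simple-sheaf descent becomes available.

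Concretely, first set $N:=\overline{\phi}(F)\in \Coh(\overline{Y},\overline{W})$. By Lemma \ref{simple} applied to $F$, we have $\End_{\overline{Y}}(N)\cong \End_{\overline{\mathcal{A}}}(F)=\mathbb{C}$, so $N$ is a simple coherent sheaf on $\overline{Y}$. By Lemma \ref{xtoy} the hypothesis $F\cong \iota^{*}F$ yields an isomorphism $N\cong \overline{\iota}^{*}N$ of $\mathcal{O}_{\overline{Y}}$-modules. The argument of the preceding Proposition applies verbatim to the \'etale double cover $\overline{q}:\overline{Y}\to Y$ (with involution $\overline{\iota}$) and produces a coherent sheaf $M\in\Coh(Y)$ together with an isomorphism $\overline{q}^{*}M\cong N$. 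By Lemma \ref{pull}, $M$ actually lies in $\Coh(Y,W)$, so $E:=\psi(M)$ is a well-defined right $\mathcal{A}^{op}$-module on $W$, i.e.\ an $\mathcal{A}$-module.

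It remains to identify $\overline{E}$ with $F$. The cartesian diagram (\ref{diag}) gives $\overline{p}^{*}q^{*}\cong \overline{q}^{*}p^{*}$ and $\overline{\mathcal{A}}_{\overline{Y}}^{op}\cong \overline{q}^{*}\mathcal{A}_Y^{op}$, and combining these with the definition of $\overline{\phi}$ produces a natural isomorphism $\overline{\phi}\circ q^{*}\cong \overline{q}^{*}\circ \phi$. Together with $\phi\circ\psi\cong \id$ this yields
\[
\overline{\phi}(\overline{E})\cong \overline{q}^{*}\phi(\psi(M))\cong \overline{q}^{*}M\cong N=\overline{\phi}(F),
\]
and since $\overline{\phi}$ is an equivalence we conclude $\overline{E}\cong F$ as $\overline{\mathcal{A}}$-modules, as required.

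The main subtlety is the base-change compatibility $\overline{\phi}\circ q^{*}\cong \overline{q}^{*}\circ \phi$ used in the last step. Conceptually it is routine—pullback commutes with the relative tensor product over an Azumaya algebra along a flat base change—but one has to unwind the cartesian square (\ref{diag}) carefully and invoke flatness (in fact \'etaleness) of $\overline{q}$ to commute $\overline{q}^{*}$ with both $p^{*}$ and with $\otimes_{\mathcal{A}_Y^{op}}$. Once this is checked the rest is bookkeeping on top of the Brauer–Severi machinery set up in the preceding lemmas.
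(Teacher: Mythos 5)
Your proof is correct and follows essentially the same route as the paper: transport $F$ to the Brauer--Severi variety via $\overline{\phi}$, use Lemma \ref{simple} and Lemma \ref{xtoy} to apply classical simple-sheaf descent along $\overline{q}$, then invoke Lemma \ref{pull} and set $E=\psi(M)$. The only cosmetic difference is in the final identification: the paper verifies $\overline{E}\cong F$ directly by flat base change for $q^{*}p_{*}$, whereas you verify $\overline{\phi}(\overline{E})\cong\overline{\phi}(F)$ using the compatibility $\overline{\phi}\circ q^{*}\cong\overline{q}^{*}\circ\phi$ and conclude by the equivalence $\overline{\phi}$ --- the same base-change input packaged on the other side of the Morita equivalence.
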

\begin{proof}
Since $F$ satisfies $F\cong\iota^{*}F$, by Lemma \ref{xtoy} we get an isomorphism $\overline{\phi}(F)\cong \overline{\iota}^{*}(\overline{\phi}(F))$. Since furthermore the $\mathcal{O}_{\overline{Y}}$-module $\overline{\phi}(F)$ is simple using Lemma \ref{simple}, it descends to $Y$, so $\overline{\phi}(F)\cong\overline{q}^{*}M$ for some coherent $\mathcal{O}_Y$-module $M$. But then $M\in \Coh(Y,W)$ due to Lemma \ref{pull}. Define $E:=\psi(M)$ then $E\in Coh_l(W,\mathcal{A})$ and $\overline{E}\cong F$ since:
\begin{align*}
\overline{E}=q^{*}\psi(M)&=q^{*}p_{*}(M\otimes G^{*})\cong \overline{p}_{*}(\overline{q}^{*}M\otimes (\overline{q}^{*}G)^{*})\cong \overline{p}_{*}(\overline{\phi}(F)\otimes (\overline{q}^{*}G)^{*})\cong F.
\end{align*}
\end{proof}

\section{Quaternion algebras on Enriques surfaces}\label{3}
\begin{defi}
A smooth projective surface $X$ is called an $\emph{Enriques surface}$ if it satisfies
\begin{itemize}
\item $H^1(X,\mathcal{O}_X)=0$
\item $\omega_X$ is 2-torsion, i.e. $\omega_X\neq \mathcal{O}_X$ but $\omega_X\otimes\omega_X\cong \mathcal{O}_X$.
\end{itemize}
\end{defi}
The 2-torsion element $\omega_X\in \Pic(X)$ induces an \'{e}tale Galois double cover 
\begin{equation*}
\pi: \overline{X}\rightarrow X.
\end{equation*}
It is well known that $\overline{X}$ is a K3 surface hence $\pi$ is a universal cover of $X$. Denote the associated involution by $\iota:\overline{X}\rightarrow\overline{X}$.

By results of Cossec and Dolgachev, see \cite[Theorem 1.1.3., Corollary 5.7.1.]{dolg} we have:
\begin{thm}
Assume $X$ is an Enriques surface over $\mathbb{C}$, then 
\begin{equation*}
\Br(X)\cong \mathbb{Z}/2\mathbb{Z}.
\end{equation*}
\end{thm}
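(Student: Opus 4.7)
The plan is to compute $\Br(X)$ via the exponential sequence on the analytic site and invoke Grothendieck's identification of the Brauer group of a smooth projective complex variety with the torsion subgroup of $H^2(X,\mathcal{O}_X^{*})$. First I would record the relevant analytic invariants of an Enriques surface: by definition $H^1(X,\mathcal{O}_X)=0$, and Serre duality yields $H^2(X,\mathcal{O}_X)\cong H^0(X,\omega_X)^{\vee}=0$ because $\omega_X$ is a nontrivial $2$-torsion line bundle. Higher Dolbeault cohomology vanishes for dimension reasons.

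Next I would compute the integral cohomology needed. From $\pi_1(X)\cong \mathbb{Z}/2\mathbb{Z}$ (the universal cover $\overline{X}$ being a K3 surface) Hurewicz gives $H_1(X,\mathbb{Z})\cong \mathbb{Z}/2\mathbb{Z}$, and combined with $b_1=0$, $b_2=10$ the universal coefficient theorem yields $H^2(X,\mathbb{Z})\cong \mathbb{Z}^{10}\oplus \mathbb{Z}/2\mathbb{Z}$. Poincar\'e duality on the oriented four-manifold $X$, together with the universal coefficient theorem applied to $H_3(X,\mathbb{Z})\cong H^1(X,\mathbb{Z})=0$, then forces $H^3(X,\mathbb{Z})\cong \mathbb{Z}/2\mathbb{Z}$, with the torsion generated by the class dual to $\omega_X\in\Pic(X)[2]$.

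Now I would feed these into the exponential sequence. The relevant four-term exact fragment of the long exact sequence attached to $0\to \mathbb{Z}\to \mathcal{O}_X\to \mathcal{O}_X^{*}\to 0$ reads
\begin{equation*}
H^2(X,\mathcal{O}_X)\to H^2(X,\mathcal{O}_X^{*})\to H^3(X,\mathbb{Z})\to H^3(X,\mathcal{O}_X),
\end{equation*}
and with both outer terms zero we obtain $H^2(X,\mathcal{O}_X^{*})\cong \mathbb{Z}/2\mathbb{Z}$. Since this group is already torsion, GAGA (comparing analytic and \'etale $\mathbb{G}_m$-cohomology) together with Grothendieck's theorem for smooth projective surfaces identifies it with $\Br(X)$, completing the proof.

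The step most likely to cause trouble is the identification $H^3(X,\mathbb{Z})\cong \mathbb{Z}/2\mathbb{Z}$, where the $2$-torsion of $H^2$ must be transported correctly to degree three via Poincar\'e duality and universal coefficients; the other potential sticking point is the coincidence between the algebraic Brauer group and its cohomological counterpart, which is classical for surfaces but still needs to be cited carefully. Once those are in hand, every remaining step in the argument is mechanical.
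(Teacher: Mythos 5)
Your argument is correct, but it is worth noting that the paper does not prove this statement at all: it simply quotes the result of Cossec and Dolgachev. What you have written is essentially the standard proof underlying that citation, carried out in full. All the individual computations check out: $H^2(X,\mathcal{O}_X)\cong H^0(X,\omega_X)^{\vee}=0$ because $\omega_X$ is nontrivial, $H^3(X,\mathcal{O}_X)=0$ for dimension reasons, and $H^3(X,\mathbb{Z})\cong H_1(X,\mathbb{Z})\cong\pi_1(X)^{ab}\cong\mathbb{Z}/2\mathbb{Z}$ by Poincar\'e duality and Hurewicz (using that the universal cover is a simply connected K3 surface), so the exponential sequence gives $H^2(X,\mathcal{O}_X^{*})\cong\mathbb{Z}/2\mathbb{Z}$. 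The two points you flag as delicate are indeed the ones requiring citations rather than computation: the comparison of analytic and \'etale $\mathbb{G}_m$-cohomology in degree $2$ (torsion parts), and Grothendieck's theorem that for a smooth projective complex surface the Azumaya Brauer group coincides with the torsion of the cohomological one; both are classical and the latter is exactly what Proposition \ref{repr} in the paper later needs in the refined form of Colliot-Th\'el\`ene's purity statement. In short, your route is self-contained and arguably more informative than the paper's bare citation, at the cost of importing the topological classification of Enriques surfaces ($b_1=0$, $b_2=10$, $\pi_1\cong\mathbb{Z}/2\mathbb{Z}$) as input.
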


This result shows that there is one nontrivial element $b_X$ in the Brauer group of an Enriques surface. The first question is if we can find a representative of this class in terms of Azumaya algebras.

\begin{prop}\label{repr}
The nontrivial element in the Brauer group of $X$ can be represented by a quaternion algebra $\mathcal{A}$ on $X$.
\end{prop}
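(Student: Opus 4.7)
The plan is to construct an Azumaya algebra of rank $4$ on $X$ whose Brauer class equals the unique nontrivial element $b_X\in\Br(X)\cong\mathbb{Z}/2\mathbb{Z}$.

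First, I would identify $b_X$ concretely via Kummer theory. Using $H^1(X,\mathcal{O}_X)=0$, the Kummer sequence $1\to\mu_2\to\mathbb{G}_m\xrightarrow{2}\mathbb{G}_m\to 1$ yields the short exact sequence
\begin{equation*}
0\to\Pic(X)/2\Pic(X)\to H^2_{et}(X,\mu_2)\to\Br(X)[2]\to 0,
\end{equation*}
so $b_X$ is the image of a class $\alpha\in H^2_{et}(X,\mu_2)$ not lying in the image of $\Pic(X)/2\Pic(X)$. A rank $4$ Azumaya algebra representing $b_X$ is the same data as a rank $2$ $\alpha$-twisted sheaf on $X$, since such a sheaf $\mathcal{E}$ produces the Azumaya algebra $\mathcal{E}nd_{\mathcal{O}_X}(\mathcal{E})$ whose class is $b_X$. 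Thus the problem reduces to producing a rank $2$ $\alpha$-twisted sheaf.

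The key input is the period-index equality for Brauer classes on smooth projective complex surfaces: for such a surface the period and the index of any Brauer class agree (in work going back to Artin, and more recently de Jong). Since $b_X$ has period $2$, it has index $2$, so there exists an Azumaya algebra of rank $4$ representing $b_X$, which is by definition a quaternion algebra.

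The main obstacle is avoiding reliance on this deep period-index theorem by giving a more intrinsic construction. A geometric route would be to build the rank $2$ twisted sheaf explicitly using the K3 cover $\overline{X}$: first produce an appropriate rank $2$ twisted sheaf on $\overline{X}$, endow it with an $\iota$-equivariant structure, and then descend to $X$ via a version of the descent argument of Section~\ref{2} adapted from Azumaya modules to twisted sheaves. The verification that the resulting algebra has Brauer class $b_X$ would then reduce to a cocycle computation involving the canonical trivialization $\omega_X^{\otimes 2}\cong\mathcal{O}_X$, together with the identification of $\alpha$ inside $H^2_{et}(X,\mu_2)$.
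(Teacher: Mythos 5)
Your core idea is the same as the paper's: the class $b_X$ has period $2$, and period equals index for Brauer classes on a complex surface, so one should be able to realize $b_X$ by a degree-$2$ (i.e.\ rank-$4$) Azumaya algebra. The differences are in packaging and in the strength of the input. The paper does not need de Jong's full period--index theorem: it restricts $b_X$ to the generic point (injectively, by purity \`a la Colliot-Th\'el\`ene), notes that $\mathbb{C}(X)$ is a $C_2$ field, and invokes the classical result of Artin/Platonov that a \emph{$2$-torsion} class over a $C_2$ field is represented by a quaternion algebra --- a much more elementary statement than period equals index for all orders. Your twisted-sheaf reformulation via the Kummer sequence is a legitimate alternative language, but it is not doing any work here beyond what the function-field statement already provides.

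The one genuine gap is the jump from ``$b_X$ has index $2$'' to ``there exists an Azumaya algebra of rank $4$ on $X$ representing $b_X$.'' The period--index theorem (in the Artin or de Jong form) is a statement about the function field: it produces a quaternion algebra $A$ over $\mathbb{C}(X)$, i.e.\ data at the generic point only. To spread this out to a sheaf of Azumaya algebras of rank $4$ on all of $X$ you need a globalization step: take a maximal order in $A$ over $\mathcal{O}_X$ and use that on a regular surface a maximal order in an everywhere-unramified division algebra is Azumaya of the same degree (this is Th\'eor\`eme 2.5 of Colliot-Th\'el\`ene's report, which the paper cites precisely at this point; the unramifiedness is exactly the statement that $[A]$ comes from $\Br(X)$). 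Equivalently, in your language, you must justify that the minimal rank of a locally free $\alpha$-twisted sheaf on $X$ equals the index of $\alpha$ at the generic point; this is true for smooth surfaces but is a theorem, not a definition. Your final paragraph proposing to avoid the period--index input by descending a twisted sheaf from the K3 cover is only a sketch and, as written, would still need this same local freeness/extension argument, so it does not close the gap.
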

\begin{proof}
The result of Cossec and Dolgachev shows that the nontrivial element $b_X\in\text{Br}(X)$ has order two.
As $X$ is smooth by \cite[Th\'{e}or\`{e}me 2.4.]{coll} the restriction to the generic point $\eta$ gives an injection
\begin{equation*}
r_\eta: \text{Br}(X)\hookrightarrow \text{Br}(\mathbb{C}(X)).
\end{equation*}
So the image $r_\eta(b_X)$ has order two in $\text{Br}(\mathbb{C}(X))$.

The field $\mathbb{C}(X)$ has property $C_2$, see \cite[II.4.5.(b)]{serre}. By a  result of Platonov (simultaneously found by Artin and Harris) the Brauer class $r_\eta(b_X)$ can be represented by a quaternion algebra $A$ over $\mathbb{C}(X)$, see \cite[Theorem 5.7]{sark} (\cite[Theorem 6.2.]{artin}).

Since the class $[A]$ comes from $\text{Br}(X)$ it is unramified at every point of codimension one in $X$, and thus by \cite[Th\'{e}or\`{e}me 2.5.]{coll} there is a quaternion algebra $\mathcal{A}$ on $X$ with $\mathcal{A}\otimes\mathbb{C}(X)=A$ such that $\left[ \mathcal{A}\right] =b_X$.
\end{proof}

One natural question to ask then: Is the pullback of the nontrivial class  still nontrivial in $\Br(\overline{X})$, i.e. is $\pi^{*}: \Br(X)\rightarrow \Br(\overline{X})$ injective? Beauville gives a complete answer to this question, see \cite[Corollary 4.3., Corollary 5.7., Corollary 6.5.]{beau}:

\begin{thm}
The morphism $\pi^{*}: \Br(X)\rightarrow \Br(\overline{X})$ is trivial if and only if there is $L\in\Pic(\overline{X})$ with $\iota^{*}L=L^{-1}$ and $c_1(L)^2\equiv 2\,\, (\modu 4)$. The surfaces $X$ with $\pi^{*}b_X=0$ form an infinite, countable union of (non-empty) hypersurfaces in the moduli space $\mathcal{M}$ of Enriques surfaces.
\end{thm}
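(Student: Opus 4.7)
The plan is to attack both parts through the Hochschild--Serre spectral sequence of the \'{e}tale Galois double cover $\pi:\overline{X}\to X$ with group $G=\langle\iota\rangle\cong\mathbb{Z}/2\mathbb{Z}$:
\begin{equation*}
E_2^{p,q}=H^p(G, H^q(\overline{X},\mathbb{G}_m)) \Longrightarrow H^{p+q}(X,\mathbb{G}_m).
\end{equation*}
Since $E_2^{2,0}=H^2(G,\mathbb{C}^*)=0$ because $\mathbb{C}^*$ is divisible, the kernel of $\pi^*:\Br(X)\to\Br(\overline{X})^G$ is the filtration piece $F^1\Br(X)$, which the spectral sequence identifies with a subgroup of $E_2^{1,1}=H^1(G,\Pic(\overline{X}))$. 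Because $\Br(X)\cong\mathbb{Z}/2\mathbb{Z}$, triviality of $\pi^*$ is equivalent to $F^1\Br(X)=\mathbb{Z}/2\mathbb{Z}$, i.e.\ to the existence of a nonzero class in $H^1(G,\Pic(\overline{X}))$ surviving to the $E_\infty$-page. Tate cohomology gives
\begin{equation*}
H^1(G,\Pic(\overline{X}))=\frac{\{L\in\Pic(\overline{X}):\iota^*L\cong L^{-1}\}}{\{M\otimes(\iota^*M)^{-1}:M\in\Pic(\overline{X})\}},
\end{equation*}
so the first claim reduces to two statements: existence of a non-coboundary $L$ with $\iota^*L\cong L^{-1}$ is equivalent to the existence of such an $L$ with $c_1(L)^2\equiv 2\,(\modu 4)$, and any such class survives the higher differentials.

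For a coboundary $L=M\otimes(\iota^*M)^{-1}$, expanding $c_1(L)=c_1(M)-\iota^*c_1(M)$ and using $\iota^*$-isometry yields
\begin{equation*}
c_1(L)^2=2\bigl(c_1(M)^2-c_1(M)\cdot\iota^*c_1(M)\bigr).
\end{equation*}
I would then invoke the lattice structure of an Enriques involution: $H^2(\overline{X},\mathbb{Z})\cong 3U\oplus 2E_8(-1)$ decomposes into the $\iota^*$-invariant part $U(2)\oplus E_8(-2)$ and anti-invariant part $U\oplus U(2)\oplus E_8(-2)$. Writing $x\in H^2(\overline{X},\mathbb{Z})$ as $x_++x_-$ over $\mathbb{Q}$ with $2x_\pm$ integral, a direct check gives $x_+^2\in\mathbb{Z}$ (using $y^2\in 4\mathbb{Z}$ for $y\in U(2)\oplus E_8(-2)$) and $x_-^2\in\tfrac12\mathbb{Z}$, and the relation $x^2=x_+^2+x_-^2\in 2\mathbb{Z}$ forces $x_+^2$ and $x_-^2$ to have the same parity, so $x\cdot\iota^*x=x_+^2-x_-^2\in 2\mathbb{Z}$. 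Applied to $x=c_1(M)$, this forces every coboundary to satisfy $c_1(L)^2\equiv 0\,(\modu 4)$; conversely, any $L$ with $\iota^*L\cong L^{-1}$ and $c_1(L)^2\equiv 2\,(\modu 4)$ is not a coboundary. The main obstacle will be to verify that such a nonzero class actually survives the differential $d_2:H^1(G,\Pic(\overline{X}))\to H^3(G,\mathbb{C}^*)=\mu_2$; I would tackle this by an explicit \v{C}ech-cocycle computation, identifying the sign $d_2([L])$ with a fixed-point contribution and then using Riemann--Roch on the K3 cover, $\chi(L)=c_1(L)^2/2+2$, to control its parity by the residue of $c_1(L)^2/2$ modulo $2$.

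For the moduli statement, by Horikawa and Namikawa the coarse moduli space $\mathcal{M}$ of Enriques surfaces is the $10$-dimensional quotient of a period domain by an arithmetic group, and the period of $X$ is encoded in a Hodge structure on the anti-invariant lattice $\Lambda^-\cong U\oplus U(2)\oplus E_8(-2)$ (since the holomorphic $2$-form on $\overline{X}$ satisfies $\iota^*\eta=-\eta$). For each $\iota^*$-anti-invariant primitive vector $v\in\Lambda^-$ with $v^2\equiv 2\,(\modu 4)$, the Noether--Lefschetz-type locus
\begin{equation*}
\mathcal{M}_v=\{[X]\in\mathcal{M}: v\,\,\text{is of Hodge type}\,(1,1)\}
\end{equation*}
is a non-empty hypersurface by surjectivity of the Enriques period map. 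By the first part, $X\in\mathcal{M}_v$ implies $\pi^*b_X=0$ (since $v$ then comes from an $L\in\Pic(\overline{X})$ with the required properties), and conversely every $X$ with $\pi^*b_X=0$ contributes some such $v$. Running through all arithmetic-group orbits of primitive anti-invariant vectors with $v^2=4k+2$ (infinitely many as $k$ varies) assembles the desired countable infinite union of hypersurfaces.
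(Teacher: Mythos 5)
First, a point of calibration: the paper does not prove this statement at all --- it is quoted verbatim from Beauville \cite[Corollary 4.3, Corollary 5.7, Corollary 6.5]{beau}, and the Hochschild--Serre/lattice-theoretic route you follow is exactly the one the paper attributes to Mart\'inez \cite{mart} in a remark. So there is no in-paper proof to compare against; the question is whether your outline would actually close.

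It would not, as written. Your setup is correct: $E_2^{2,0}=H^2(G,\mathbb{C}^*)=0$, so $\ker(\pi^*)=E_\infty^{1,1}=\ker\bigl(d_2\colon H^1(G,\Pic(\overline{X}))\to H^3(G,\mathbb{C}^*)\cong\mathbb{Z}/2\bigr)$, and your lattice computation that coboundaries $M\otimes(\iota^*M)^{-1}$ satisfy $c_1^2\equiv 0\ (\modu 4)$ is fine. But your stated reduction --- that existence of a non-coboundary $L$ with $\iota^*L\cong L^{-1}$ is \emph{equivalent} to existence of one with $c_1(L)^2\equiv 2\ (\modu 4)$, and that any nonzero class survives $d_2$ --- is false. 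Note that $[L]\mapsto \tfrac12 c_1(L)^2 \bmod 2$ is a quadratic form on the $\mathbb{F}_2$-vector space $H^1(G,\Pic(\overline{X}))$, not a linear one (the cross term $c_1(L)\cdot c_1(L')$ need not be even on $\Lambda^-\cong U\oplus U(2)\oplus E_8(-2)$, which contains a hyperbolic plane $U$), whereas $d_2$ is a homomorphism; so $\ker d_2$ cannot simply be the zero locus of this form. Concretely, if $\Pic(\overline{X})\cap\Lambda^-$ is generated by a primitive $v$ with $v^2=-4$, then $H^1(G,\Pic(\overline{X}))\neq 0$ but every anti-invariant class has square $\equiv 0\ (\modu 4)$; the theorem then asserts $\pi^*b_X\neq 0$, which forces $d_2\neq 0$ on that class. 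Hence the entire content of the theorem is the exact computation of $d_2$ (equivalently, of $\pi^*b_X$ as an element of $\Hom(T_{\overline{X}},\mathbb{Z}/2)$, which is how Beauville proceeds), and your plan for it --- ``identifying the sign $d_2([L])$ with a fixed-point contribution'' --- cannot work, since $\iota$ acts freely on the K3 cover and there are no fixed points. This is a genuine missing idea, not a detail.

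The second half of your outline is essentially Beauville's Sections 5--6 and is sound in spirit, but two points need attention: for $\mathcal{M}_v$ to be a non-empty hypersurface you must take $v^2<0$ (so that $v^\perp$ has signature $(2,9)$ and meets the period domain), and you must check that $v^\perp$ is not entirely contained in the discriminant locus that is removed from the Enriques period domain. Both are handled in \cite{beau} but are not automatic from ``surjectivity of the period map.''
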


Thus if $X$ is a very general Enriques surface (in the sense of the previous theorem) then the pullback of the quaternion algebra $\mathcal{A}$ constructed in Proposition \ref{repr} represents the nontrivial class $\pi^{*}b_X\in \Br(\overline{X})$.

\begin{rem}
For a description of the (non)triviality of $\pi^{*}: \Br(X)\rightarrow \Br(\overline{X})$ using lattice theory, group cohomology and the Hochschild-Serre spectral sequence, see \cite{mart}.  
\end{rem}

\section{Moduli schemes of sheaves over quaternion algebras}\label{4}
Assume $W$ is a smooth projective $d$-dimensional variety and $\mathcal{A}$ is an Azumaya algebra on $W$, then we can think of the pair $(W,\mathcal{A})$ as a noncommutative version of $W$. In this section, we want to study moduli schemes of sheaves on such noncommutative pairs.

\begin{defi}
A sheaf $E$ on $W$ is called a generically simple torsion free $\mathcal{A}$-module, if $E$ is a left $\mathcal{A}$-module such that $E$ is coherent and torsion free as a $\mathcal{O}_W$-module and the stalk $E_{\eta}$ over the generic point $\eta\in W$ is a simple module over $\mathcal{A}_{\eta}$. If furthermore $\mathcal{A}_{\eta}$ is a division ring over $\mathbb{C}(W)$ then such a module is also called a torsion free $\mathcal{A}$-module of rank one.
\end{defi}

\begin{rem}
A generically simple torsion free $\mathcal{A}$-module $E$ is simple, see \cite{hoff}.
\end{rem}

Apart from being simple, these modules share many properties with classical stable sheaves, for example we have

\begin{lem}\label{moduleiso}
Assume $E$ and $F$ are generically simple torsion free $\mathcal{A}$-modules with the same Chern classes, then $\Hom_{\mathcal{A}}(E,F)\neq 0$ implies $E\cong F$.
\end{lem}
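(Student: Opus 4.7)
The plan is to run the standard ``a nonzero morphism between stable objects is an isomorphism'' argument, using only that $E_\eta$ and $F_\eta$ are simple $\mathcal{A}_\eta$-modules and that $E,F$ are torsion free as $\mathcal{O}_W$-modules. First I would take a nonzero $f \in \Hom_{\mathcal{A}}(E,F)$ and restrict it to the generic point $\eta$. The image subsheaf $\Ima(f)\subseteq F$ is nonzero, and its generic stalk cannot be zero: otherwise $\Ima(f)$ would be a coherent torsion $\mathcal{O}_W$-subsheaf of the torsion-free sheaf $F$, forcing $\Ima(f)=0$. Hence $f_\eta$ is a nonzero $\mathcal{A}_\eta$-homomorphism between simple $\mathcal{A}_\eta$-modules, and Schur's lemma for modules over the ring $\mathcal{A}_\eta$ then makes $f_\eta$ an isomorphism.

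Next I would promote this to global injectivity: $\ker f$ is an $\mathcal{A}$-submodule of $E$ whose generic stalk vanishes, so as a coherent $\mathcal{O}_W$-subsheaf of the torsion-free sheaf $E$ it must itself be zero. This yields a short exact sequence of $\mathcal{A}$-modules
\begin{equation*}
0 \longrightarrow E \xrightarrow{\,f\,} F \longrightarrow C \longrightarrow 0,
\end{equation*}
in which $C$ is supported in codimension at least one, since $f_\eta$ is already an isomorphism.

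The endgame is purely numerical. Since $E$ and $F$ have the same Chern classes (hence the same Chern character and the same $\mathcal{O}_W$-rank), Hirzebruch--Riemann--Roch applied after twisting by powers of an ample line bundle on $W$ shows that $E$ and $F$ have the same Hilbert polynomial. From the short exact sequence one then reads off $P_C = P_F - P_E = 0$, which forces $C=0$, so $f$ is an isomorphism as required.

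I do not expect a serious obstacle. The only place where the noncommutative structure really enters is the Schur-lemma step, which works verbatim over the ring $\mathcal{A}_\eta$ precisely because ``generically simple'' is the hypothesis tailor-made for this. All remaining steps are standard sheaf-theoretic manipulations that do not interact with the $\mathcal{A}$-action at all, so they carry over unchanged from the classical proof that a nonzero morphism between stable sheaves with equal Chern classes is an isomorphism.
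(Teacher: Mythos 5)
Your proposal is correct and follows essentially the same route as the paper's proof: generic bijectivity from Schur's lemma over $\mathcal{A}_\eta$, injectivity from torsion-freeness, and vanishing of the cokernel from the equality of Chern classes. The paper states these three steps more tersely, but the argument is the same.
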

\begin{proof}
A nontrivial $\mathcal{A}$-morphism $\phi$ must be generically bijective as $E$ and $F$ are generically simple. As $E$ and $F$ are torsion free this implies that $\phi$ is injective, so we get an exact sequence with $Q=\cok(\phi)$:
\begin{equation*}
\begin{tikzcd}
0 \arrow[r] & E \arrow[r,"\phi"] & F \arrow[r] & Q \arrow[r] & 0 
\end{tikzcd}
\end{equation*} 
But $E$ and $F$ have the same Chern classes, so $Q=0$ and hence $E\cong F$.
\end{proof}

By fixing the Hilbert polynomial $P$ of such sheaves, Hoffmann and Stuhler showed that these modules are classified by a moduli scheme, see \cite[Theorem 2.4. iii), iv)]{hoff}:

\begin{thm}
There is a projective moduli scheme $\M_{\mathcal{A}/W,P}$ classifying generically simple  torsion free $\mathcal{A}$-modules with Hilbert polynomial $P$ on $W$.
\end{thm}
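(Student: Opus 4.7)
The plan is to adapt the Gieseker--Simpson GIT construction of moduli of sheaves to the Azumaya setting. The first step is boundedness: a generically simple torsion free $\mathcal{A}$-module $E$ with Hilbert polynomial $P$ is in particular an $\mathcal{O}_W$-torsion-free sheaf of fixed rank $\rk(\mathcal{A})$ with Chern classes determined by $P$ and those of $\mathcal{A}$, so classical Maruyama / Grothendieck boundedness yields an integer $m\gg 0$ such that for every such $E$ the twist $E(m)$ is globally generated with vanishing higher cohomology. Setting $N=P(m)$, one then writes every such $E$ as an $\mathcal{A}$-linear quotient $\mathcal{A}(-m)^{\oplus N}\twoheadrightarrow E$.

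Next I would represent the functor of $\mathcal{A}$-linear quotients of $\mathcal{A}(-m)^{\oplus N}$ with Hilbert polynomial $P$ as a closed subscheme $\mathcal{Q}_{\mathcal{A}}$ of the classical Quot-scheme of $\mathcal{O}_W$-module quotients, cut out by the closed condition that the kernel be stable under the right $\mathcal{A}$-action. Inside $\mathcal{Q}_{\mathcal{A}}$ let $R$ be the open locus parametrizing quotients $E$ that are torsion free and generically simple as $\mathcal{A}$-modules and for which $H^0(\mathcal{A}(-m))^{\oplus N}\to H^0(E(m))$ is an isomorphism. The group $\mathrm{PGL}_N$ acts on $R$ by change of basis, and I would define $\M_{\mathcal{A}/W,P}$ as the quotient $R/\mathrm{PGL}_N$. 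The crucial simplification over the commutative case is that generically simple torsion free $\mathcal{A}$-modules are automatically simple; Lemma \ref{moduleiso} then shows that any two such modules with the same Hilbert polynomial admitting a nonzero $\mathcal{A}$-morphism are isomorphic. Hence every $\mathrm{PGL}_N$-orbit in $R$ is closed with stabilizer reducing to scalars, and the quotient exists as a genuine geometric quotient without any GIT-stability subtlety.

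The main obstacle is projectivity, i.e.\ the valuative criterion. For this I would run a Langton-style argument in the $\mathcal{A}$-linear category: starting from a flat family of generically simple torsion free $\mathcal{A}$-modules over the punctured spectrum of a discrete valuation ring, I take any coherent flat $\mathcal{O}_W$-extension; if the special fibre acquires torsion or fails to be generically simple, I modify by a maximal $\mathcal{A}$-invariant destabilizing subsheaf. Because elementary modifications along $\mathcal{A}$-stable subsheaves preserve the $\mathcal{A}$-module structure, the iteration stays inside the category of $\mathcal{A}$-modules, and termination follows from the same discrete invariant that controls Langton's original proof. Uniqueness of the limit is then forced by simplicity via Lemma \ref{moduleiso}, giving properness and hence projectivity of $\M_{\mathcal{A}/W,P}$.
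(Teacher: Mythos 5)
First, a point of comparison: the paper does not prove this statement at all --- it is quoted verbatim from Hoffmann--Stuhler \cite[Theorem 2.4]{hoff} --- so your sketch has to be measured against their construction rather than against anything in this text. Your overall architecture (boundedness, a Quot-scheme of $\mathcal{A}$-linear quotients, a quotient by $\mathrm{PGL}_N$, and a Langton-type argument for properness) is indeed the route taken in \cite{hoff}, and the Langton step is described correctly: elementary modifications along $\mathcal{A}$-stable subsheaves remain in the category of $\mathcal{A}$-modules, the usual discrete invariants force termination, and simplicity gives uniqueness of the limit.

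However, your first step contains a genuine error. The family of torsion-free $\mathcal{O}_W$-sheaves of rank $\rk(\mathcal{A})$ with fixed Hilbert polynomial is \emph{not} bounded once the rank exceeds one; the classical boundedness theorems (Grothendieck, Kleiman, Maruyama) require in addition a uniform upper bound on $\mu_{\max}$, i.e.\ on the slopes of subsheaves. (On a surface one can fix $c_1$ and $c_2$ and still produce unbounded families, e.g.\ suitable twists $I_{Z_n}(nH)\oplus I_{W_n}(-nH)$ with the colengths of $Z_n,W_n$ adjusted to keep $c_2$ constant.) The point of the first lemma in \cite{hoff} --- and the place where generic simplicity genuinely enters the construction --- is that for a generically simple torsion free $\mathcal{A}$-module $E$ and any nonzero $\mathcal{O}_W$-subsheaf $F\subseteq E$, the $\mathcal{A}$-submodule $\mathcal{A}\cdot F$ generated by $F$ is generically all of $E$ and is a quotient of $\mathcal{A}\otimes F$; this yields $\mu(F)\le \mu(E)-\mu_{\min}(\mathcal{A})$, hence boundedness. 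Your appeal to ``classical Maruyama boundedness'' skips exactly this argument, and without it the integer $m$ and the Quot scheme are not available. A second, smaller gap: the existence of $R/\mathrm{PGL}_N$ as a quasi-projective geometric quotient does not follow merely from the orbits being closed in $R$ with scalar stabilizers; one must still verify that the points of $R$ are GIT-stable for a suitable linearization on the Quot scheme (or invoke Simpson's moduli of $\Lambda$-modules), which is how \cite{hoff} actually produce the scheme structure before combining it with the Langton argument to obtain projectivity.
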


We want to study these moduli schemes for a noncommutative Enriques surfaces $(X,\mathcal{A})$, where $X$ is a very general Enriques surface and $\mathcal{A}$ is a quaternion algebra representing the nontrivial class in $\Br(X)$. Note that the $\mathcal{O}_X$-rank of a torsion free $\mathcal{A}$-module of rank one is four in this case.  

We also have an associated noncommutative K3 surface $(\overline{X},\overline{\mathcal{A}})$. Now we first recall some facts about the moduli schemes for such pairs, see \cite[Theorem 3.6.]{hoff}:

\begin{thm}\label{hoff}
Let $\overline{X}$ be a K3 surface which is a double cover of a very general Enriques surface $X$ and let $\overline{\mathcal{A}}$ be the quaternion algebra coming from the quaternion algebra on $X$ which represents the nontrivial class in $\Br(X)$.
\begin{enumerate}[i)]
\item The moduli scheme $\M_{\overline{\mathcal{A}}/\overline{X}}$ of torsion free $\overline{\mathcal{A}}$-modules of rank one is smooth.
\item There is a nowhere degenerate alternating 2-form on the tangent bundle
of $\M_{\overline{\mathcal{A}}/\overline{X}}$
\item Every torsion free $\overline{\mathcal{A}}$-module of rank one can be deformed into a locally projective $\overline{\mathcal{A}}$-module, i.e. the locus $\M_{\overline{\mathcal{A}}/\overline{X}}^{lp}$ of locally projective $\mathcal{A}$-modules is dense in $\M_{\overline{\mathcal{A}}/\overline{X}}$.
\item For fixed Chern classes $\overline{c_1}$ and $\overline{c_2}$ we have 
\begin{equation*}
\dim \M_{\overline{\mathcal{A}}/\overline{X},\overline{c_1},\overline{c_2}}=\frac{\overline{\Delta}}{4}-c_2(\overline{\mathcal{A}})-6
\end{equation*}
where $\overline{\Delta}=8\overline{c_2}-3\overline{c_1}^2$ is the discriminant and $\overline{c_i}=\pi^{*}c_i$.
\end{enumerate}
\end{thm}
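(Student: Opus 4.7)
The plan is to recognize this theorem as a direct application of Hoffmann--Stuhler's general result on moduli of generically simple torsion free modules over an Azumaya algebra on a smooth projective surface, specialised to the K3 setting. The only serious input we must supply from our side is the nontriviality of $\overline{\mathcal{A}}$ in $\Br(\overline{X})$: this is where the hypothesis that $X$ is \emph{very general} enters, via Beauville's theorem cited above, which guarantees injectivity of $\pi^{*}:\Br(X)\to\Br(\overline{X})$ generically. Combined with Proposition \ref{repr}, $\overline{\mathcal{A}}$ is then a nontrivial quaternion algebra on the K3 surface $\overline{X}$, so the pair $(\overline{X},\overline{\mathcal{A}})$ is a bona fide noncommutative K3 surface in the sense of \cite{hoff}.

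For assertion (i), smoothness, I would run the standard deformation-theoretic argument adapted to Azumaya modules. For a generically simple torsion free $\overline{\mathcal{A}}$-module $E$, the obstruction to deforming $E$ lies in the trace-free part of $\Ext^{2}_{\overline{\mathcal{A}}}(E,E)$. By Proposition \ref{serre} and $\omega_{\overline{X}}\cong\mathcal{O}_{\overline{X}}$,
\begin{equation*}
\Ext^{2}_{\overline{\mathcal{A}}}(E,E)\cong \Hom_{\overline{\mathcal{A}}}(E,E)^{\vee}\cong \mathbb{C},
\end{equation*}
since $E$ is simple. One checks that the trace map $\Ext^{2}_{\overline{\mathcal{A}}}(E,E)\to H^{2}(\overline{X},\mathcal{O}_{\overline{X}})\cong\mathbb{C}$ is an isomorphism (it sends $\id$ in $\Hom(E,E)\cong\mathbb{C}$ to the generator under Serre duality), so the trace-free obstruction vanishes and the moduli scheme is smooth. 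For (ii), the symplectic form is then the composition of the Yoneda pairing
\begin{equation*}
\Ext^{1}_{\overline{\mathcal{A}}}(E,E)\times \Ext^{1}_{\overline{\mathcal{A}}}(E,E)\longrightarrow \Ext^{2}_{\overline{\mathcal{A}}}(E,E)
\end{equation*}
with the trace map; perfectness of the pairing is exactly Serre duality on the Azumaya side, and graded commutativity of Yoneda makes it alternating.

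For (iii), density of $\M^{lp}_{\overline{\mathcal{A}}/\overline{X}}$, I would exploit the smoothness from (i): the locally projective locus is open, and the canonical sequence $0\to E\to E^{**}\to T\to 0$ with $T$ zero-dimensional lets one produce explicit deformations of $E$ inside $\M_{\overline{\mathcal{A}}/\overline{X}}$ whose general member is locally projective, by spreading out the length of $T$. Since the moduli scheme is smooth (hence its components are irreducible and equal to the closure of their generic points), density of the open nonempty locus $\M^{lp}$ in each component follows. For (iv), the dimension follows from $\dim \M=\dim \Ext^{1}_{\overline{\mathcal{A}}}(E,E)$ together with Riemann--Roch for Azumaya algebras: one computes $\chi(\mathcal{H}om_{\overline{\mathcal{A}}}(E,E))$ on $\overline{X}$ via the Chern classes of $\overline{\mathcal{A}}$ and $E$, then uses $\hom_{\overline{\mathcal{A}}}=\ext^{2}_{\overline{\mathcal{A}}}=1$ to solve for $\ext^{1}$, producing the stated expression involving the discriminant $\overline{\Delta}=8\overline{c_{2}}-3\overline{c_{1}}^{2}$ and $c_{2}(\overline{\mathcal{A}})$.

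The main obstacle I anticipate is not any single step but the bookkeeping of the trace map for Azumaya modules: verifying that the trace identifies the obstruction group with $H^{2}(\mathcal{O}_{\overline{X}})$ in an Serre-duality compatible way requires a careful local analysis after passing to an \'etale trivialization of $\overline{\mathcal{A}}$ and using Morita equivalence, much in the style of Lemma \ref{hompull}. Once this trace formalism is in place, (i)--(iv) all fall out more or less mechanically from Serre duality, Riemann--Roch and the triviality of $\omega_{\overline{X}}$.
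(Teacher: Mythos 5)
The paper itself supplies no proof of this statement: it is quoted directly from Hoffmann--Stuhler \cite[Theorem 3.6]{hoff}, so the only meaningful comparison is with their argument, and your sketch does follow the same overall route (trace map for Azumaya modules, Mukai-style two-form from the Yoneda pairing plus Serre duality, Riemann--Roch for the dimension count). Your identification of where ``very general'' enters --- guaranteeing via Beauville that $\overline{\mathcal{A}}$ is nontrivial on $\overline{X}$, so that ``rank one'' makes sense --- is correct. In (i), however, the pivotal claim that the obstruction class lies in the kernel of $\tr_{\overline{\mathcal{A}}}\colon\Ext^2_{\overline{\mathcal{A}}}(E,E)\to H^2(\overline{X},\mathcal{O}_{\overline{X}})$ is asserted, not proved. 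Since $\Ext^2_{\overline{\mathcal{A}}}(E,E)\cong\mathbb{C}\neq 0$ on a K3 surface, this is the \emph{entire} content of smoothness: one must show that the trace of the obstruction is itself the obstruction to an unobstructed problem (deforming the associated determinant line bundle, $\Pic(\overline{X})$ being smooth), as in Mukai's original argument. You flag this as the ``main obstacle'' but treat it as bookkeeping. A smaller point: antisymmetry of the form in (ii) comes from the symmetry of the trace under composition, $\tr(a\circ b)=(-1)^{|a||b|}\tr(b\circ a)$, not from graded commutativity of the Yoneda product, which fails for a general Ext-algebra.

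The genuine gap is in (iii). Openness of $\M^{lp}_{\overline{\mathcal{A}}/\overline{X}}$ together with smoothness of the moduli scheme yields density only on those irreducible components that $\M^{lp}_{\overline{\mathcal{A}}/\overline{X}}$ actually meets; the assertion to be proved is precisely that it meets every component, i.e.\ that \emph{every} torsion free $E$ admits a deformation which is locally projective. ``Spreading out the length of $T$'' names this conclusion rather than establishing it. The actual argument (in Hoffmann--Stuhler, and visible in adapted form in the proof of Theorem \ref{thm2} ii) of this paper) uses the sequence $0\to E\to E^{**}\to T\to 0$ and the surjectivity of the connecting homomorphism $\delta\colon\Ext^1_{\overline{\mathcal{A}}}(E,E)\to\Ext^2_{\overline{\mathcal{A}}}(T,E)$ to produce a first-order, and then an actual, deformation of $E$ strictly decreasing the length of $E^{**}/E$; induction on that length then places $[E]$ in the closure of $\M^{lp}_{\overline{\mathcal{A}}/\overline{X}}$. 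Without that cohomological input your part (iii) does not close.
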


By using the $\overline{\mathcal{A}}$-Mukai vector we even get by \cite[Theorem 2.11]{reede}:

\begin{thm}
Let the pair $(\overline{X},\overline{\mathcal{A}})$ be as in Theorem \ref{hoff}. Assume $\overline{v}$ is a fixed primitive $\overline{\mathcal{A}}$-Mukai vector, then $\M_{\overline{\mathcal{A}}/\overline{X},\overline{v}}$ is an irreducible holomorphic symplectic manifold deformation equivalent to $\Hilb^{\frac{\overline{v}^2}{2}+1}(\overline{X})$.
\end{thm}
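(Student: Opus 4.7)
The plan is to reduce to the main theorem of the author's earlier paper, namely \cite[Theorem 2.11]{reede}, which establishes exactly this kind of IHSM / Hilbert-scheme deformation equivalence for moduli of generically simple torsion-free modules over an Azumaya algebra on a K3 surface. So the work splits into (a) verifying the hypotheses of that theorem in our concrete setting, and (b) understanding why the conclusion has the precise form stated.

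For (a), the hypotheses I must check are: $\overline{X}$ is a projective K3 surface, $\overline{\mathcal{A}}$ is a nontrivial Azumaya algebra on it, and $\overline{v}$ is primitive in the Azumaya Mukai lattice. Nontriviality of $\overline{\mathcal{A}}$ is the key geometric input and is guaranteed by our standing ``very general'' assumption on $X$ together with Beauville's theorem from section \ref{3}, which ensures $\pi^{*} b_X \neq 0 \in \Br(\overline{X})$. Primitivity of $\overline{v}$ is in the statement. The auxiliary structural facts needed — smoothness of $\M_{\overline{\mathcal{A}}/\overline{X}}$, the existence of a nowhere-degenerate alternating $2$-form on its tangent bundle, and density of the locally projective locus — are supplied directly by Theorem \ref{hoff} (i)--(iii).

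The content of \cite[Theorem 2.11]{reede} that one then invokes is a deformation argument: one varies the noncommutative pair $(\overline{X},\overline{\mathcal{A}},\overline{v})$ in an appropriate family so as to connect it to a pair where $\overline{\mathcal{A}}$ becomes Morita-trivial, thus reducing the problem to a moduli space of stable sheaves on a K3 surface with a primitive (twisted) Mukai vector. By the classical results of Mukai, O'Grady and Yoshioka, such a moduli space is an irreducible holomorphic symplectic manifold deformation equivalent to $\Hilb^{n}(\overline{X})$, where $n$ is determined by the Mukai pairing $\overline{v}^2$. Matching dimensions via Theorem \ref{hoff} (iv) then fixes $n = \tfrac{\overline{v}^{2}}{2} + 1$.

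The main obstacle, which is addressed once and for all in \cite[Theorem 2.11]{reede}, is making the deformation argument rigorous: one has to produce a family of noncommutative K3's in which $\overline{\mathcal{A}}$ degenerates, keep $\overline{v}$ primitive along the family (so that every semistable object is stable and the moduli space stays smooth and projective), and ensure that $\M_{\overline{\mathcal{A}}/\overline{X},\overline{v}}$ itself deforms flatly and smoothly so that deformation invariance of the diffeomorphism type can be applied. Density of $\M_{\overline{\mathcal{A}}/\overline{X}}^{lp}$ is what allows one to work with locally projective modules in a Zariski open subset during the deformation, and is the technical point that links the noncommutative situation to the commutative Mukai-theoretic one.
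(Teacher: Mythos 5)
Your proposal is correct and follows essentially the same route as the paper: the paper offers no independent argument, but simply derives the statement as a direct application of \cite[Theorem 2.11]{reede}, whose hypotheses (nontriviality of $\overline{\mathcal{A}}$ on the covering K3 for very general $X$, primitivity of $\overline{v}$) are exactly the ones you identify. Your additional sketch of the internal deformation/Morita-trivialization mechanism of the cited theorem is reasonable supplementary context but is not part of the paper's proof.
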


The covering involution $\iota: \overline{X}\rightarrow \overline{X}$ induces an involution  
\begin{equation*}
\iota^{*}: \M_{\overline{\mathcal{A}}/\overline{X},\overline{c_1},\overline{c_2}} \rightarrow \M_{\overline{\mathcal{A}}/\overline{X},\overline{c_1},\overline{c_2}},\,\,\, \left[F\right]\mapsto \left[\iota^{*}F\right]  
\end{equation*}

\begin{lem}\label{anti}
The involution $\iota^{*}$ is antisymplectic, that is if we denote the symplectic form on the tangent bundle of $\M_{\overline{\mathcal{A}}/\overline{X}}$  by $\omega$, then $\omega(\iota^{*}f_1,\iota^{*}f_2)=-\omega(f_1,f_2)$.  
\end{lem}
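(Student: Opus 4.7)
The plan is to unwind the definition of the symplectic form $\omega$ on $\M_{\overline{\mathcal{A}}/\overline{X}}$ and track how pullback by $\iota$ interacts with each ingredient. At a point $[F]$, the tangent space is $\Ext^1_{\overline{\mathcal{A}}}(F,F)$ and the form is the composition
\begin{equation*}
\omega: \Ext^1_{\overline{\mathcal{A}}}(F,F)\times \Ext^1_{\overline{\mathcal{A}}}(F,F)\xrightarrow{\;\cup\;} \Ext^2_{\overline{\mathcal{A}}}(F,F)\xrightarrow{\;\tr\;} H^2(\overline{X},\mathcal{O}_{\overline{X}})\xrightarrow{\;\sim\;} \mathbb{C},
\end{equation*}
where $\cup$ is the Yoneda product, $\tr$ is induced by the reduced trace $\overline{\mathcal{A}}\to \mathcal{O}_{\overline{X}}$, and the last isomorphism is Serre dual to evaluation against a fixed holomorphic symplectic form $\sigma\in H^0(\overline{X},\omega_{\overline{X}})$.

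Since $\overline{\mathcal{A}}=\pi^{*}\mathcal{A}$ and $\pi\circ\iota=\pi$, there is a canonical isomorphism $\iota^{*}\overline{\mathcal{A}}\cong \overline{\mathcal{A}}$, so $\iota^{*}$ induces maps $\Ext^i_{\overline{\mathcal{A}}}(F,F)\to \Ext^i_{\overline{\mathcal{A}}}(\iota^{*}F,\iota^{*}F)$ that are compatible both with the Yoneda product, $\iota^{*}(\alpha\cup\beta)=\iota^{*}\alpha\cup\iota^{*}\beta$, and with the trace (the naturality square for $\tr$ under pullback commutes). Consequently, for $f_1,f_2\in T_{[F]}\M_{\overline{\mathcal{A}}/\overline{X}}$, the class $\tr(\iota^{*}f_1\cup\iota^{*}f_2)\in H^2(\overline{X},\mathcal{O}_{\overline{X}})$ is exactly $\iota^{*}\bigl(\tr(f_1\cup f_2)\bigr)$; the final identification with $\mathbb{C}$ then differs from the original by the scalar with which $\iota^{*}$ acts on $H^2(\overline{X},\mathcal{O}_{\overline{X}})$.

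The crux is to show this scalar is $-1$. By Serre duality on the K3 surface $\overline{X}$, the one-dimensional space $H^2(\overline{X},\mathcal{O}_{\overline{X}})$ is dual to $H^0(\overline{X},\omega_{\overline{X}})=\mathbb{C}\cdot\sigma$, so it suffices to verify $\iota^{*}\sigma=-\sigma$. Étale descent for the Galois cover $\pi$ identifies the $+1$-eigenspace of $\iota^{*}$ on $H^0(\overline{X},\omega_{\overline{X}})$ with $H^0(X,\omega_X)$ via the natural pullback map, but $X$ is an Enriques surface with $p_g(X)=h^0(X,\omega_X)=0$. Hence $\iota^{*}$ has no $+1$-eigenvector on the one-dimensional space $H^0(\overline{X},\omega_{\overline{X}})$, so it acts as $-1$ there and dually on $H^2(\overline{X},\mathcal{O}_{\overline{X}})$. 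Combined with the preceding paragraph, this yields $\omega(\iota^{*}f_1,\iota^{*}f_2)=-\omega(f_1,f_2)$.

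The main obstacle I anticipate is the precise naturality of the trace map for an Azumaya algebra under pullback along a flat morphism, but this reduces to the classical statement for endomorphisms of coherent sheaves: after an étale trivialisation $\overline{\mathcal{A}}\cong\mathcal{E}nd(P)$, Morita equivalence converts the reduced trace into the ordinary trace of sheaf endomorphisms, whose naturality under flat pullback is standard, and the construction of $\tr$ on $\Ext^{*}_{\overline{\mathcal{A}}}$ is étale-local.
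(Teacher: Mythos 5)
Your proof is correct and follows essentially the same route as the paper: functoriality of the Yoneda product and of the trace map under $\iota^{*}$, reducing everything to the fact that $\iota^{*}$ acts by $-1$ on $H^2(\overline{X},\mathcal{O}_{\overline{X}})$, which both you and the paper deduce from $H^0(X,\omega_X)=0$ via the identification with $H^0(\overline{X},\omega_{\overline{X}})=\mathbb{C}\sigma$. Your eigenspace/descent phrasing of that last step is just a slightly more explicit version of the paper's assertion that $\iota^{*}\sigma=-\sigma$.
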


\begin{proof}
By \cite[Theorem 3.6. ii)]{hoff}, and similar to Mukai's construction, after the identification $T_{\left[F\right]}\M_{\overline{\mathcal{A}}/\overline{X}}\cong\Ext^1_{\overline{\mathcal{A}}}(F,F)$ the symplectic form is defined  by the Yoneda product
\begin{equation*}
\Ext^1_{\overline{\mathcal{A}}}(F,F) \times \Ext^1_{\overline{\mathcal{A}}}(F,F) \rightarrow \Ext^2_{\overline{\mathcal{A}}}(F,F).
\end{equation*}
composed with the trace map $\tr_{\overline{\mathcal{A}}}: \Ext^2_{\overline{\mathcal{A}}}(F,F) \rightarrow H^2(\overline{X},\mathcal{O}_{\overline{X}})$.

Using the functoriality of the Yoneda pairing (the cup product) we get the following commutative diagram
\begin{equation*}
\begin{tikzcd}
\Ext^1_{\overline{\mathcal{A}}}(F,F) \times \Ext^1_{\overline{\mathcal{A}}}(F,F) \arrow[r] \arrow[d,swap,shift right=3em,"\iota^{*}"]\arrow[d,shift left=3em,"\iota^{*}"]
& \Ext^2_{\overline{\mathcal{A}}}(F,F) \arrow[d,"\iota^{*}"] \\
\Ext^1_{\overline{\mathcal{A}}}(\iota^{*}F,\iota^{*}F) \times \Ext^1_{\overline{\mathcal{A}}}(\iota^{*}F,\iota^{*}F)
  \arrow[r]
& \Ext^2_{\overline{\mathcal{A}}}(\iota^{*}F,\iota^{*}F) 
\end{tikzcd}
\end{equation*}
According to the definition in \cite{hoff} the trace map $\tr_{\overline{\mathcal{A}}}$ is the composition of the forgetful functor from $\overline{\mathcal{A}}$-modules to $\mathcal{O}_{\overline{X}}$-modules and the usual trace map $\tr_{\mathcal{O}_{\overline{X}}}$, so $\tr_{\overline{\mathcal{A}}}$ is also functorial and we get the following commutative diagram
\begin{equation*}
\begin{tikzcd}
\Ext^2_{\overline{\mathcal{A}}}(F,F) \arrow{d}[swap]{\iota^{*}} \arrow{r}{\tr_{\overline{\mathcal{A}}}} & H^2(\overline{X},\mathcal{O}_{\overline{X}})\arrow{d}{\iota^{*}}\\
\Ext^2_{\overline{\mathcal{A}}}(\iota^{*}F,\iota^{*}F)\arrow{r}{\tr_{\overline{\mathcal{A}}}} & H^2(\overline{X},\mathcal{O}_{\overline{X}})
\end{tikzcd}
\end{equation*} 
But $\iota^{*} : H^2(\overline{X},\mathcal{O}_{\overline{X}}) \rightarrow H^2(\overline{X},\mathcal{O}_{\overline{X}})$ is multiplication by $-1$. This follows from the identification $H^2(\overline{X},\mathcal{O}_{\overline{X}})\cong\mathbb{C}$ by using $H^0(\overline{X},\omega_{\overline{X}})=\mathbb{C}\sigma$ with the symplectic form  $\sigma$ on $\overline{X}$ and the fact that $\iota^{*}$ is antisymplectic with respect to $\sigma$ as $H^0(X,\omega_X)=0$.

Putting both diagrams together, we see that $\iota^{*}$ is in fact antisymplectic. 
\end{proof}

\begin{cor}\label{isot}
The locus of fixed points of the involution 
\begin{equation*}
\Fix(\iota^{*})\subset \M_{\overline{\mathcal{A}}/\overline{X},\overline{c_1},\overline{c_2}}
\end{equation*}
is a smooth isotropic projective subscheme.
\end{cor}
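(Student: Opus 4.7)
The plan is to verify the three properties of $\Fix(\iota^{*})$ in turn, the substantive input being Lemma \ref{anti}.

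First, projectivity is automatic: $\Fix(\iota^{*})$ is the scheme-theoretic equaliser of $\iota^{*}$ and the identity, hence a closed subscheme of the projective scheme $\M_{\overline{\mathcal{A}}/\overline{X},\overline{c_1},\overline{c_2}}$, and therefore projective.

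Second, for smoothness I would appeal to the classical fact that in characteristic zero the fixed locus of a finite-order automorphism of a smooth scheme is itself smooth. Concretely, at a fixed point $[F]$, Cartan's linearisation lemma allows one to choose formal coordinates on the completed local ring of $\M_{\overline{\mathcal{A}}/\overline{X},\overline{c_1},\overline{c_2}}$ in which $\iota^{*}$ acts as its linear differential on the tangent space; the fixed subscheme is then cut out locally by the coordinates spanning the $(-1)$-eigenspace of $d\iota^{*}$, yielding a smooth subscheme whose tangent space at $[F]$ is exactly the $(+1)$-eigenspace $T^{+}_{[F]}\subseteq T_{[F]}\M_{\overline{\mathcal{A}}/\overline{X}}\cong \Ext^1_{\overline{\mathcal{A}}}(F,F)$.

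Third, isotropy is a direct consequence of Lemma \ref{anti}. Given any two tangent vectors $v_1,v_2\in T^{+}_{[F]}\Fix(\iota^{*})$, we have $\iota^{*}v_i=v_i$, so
\[
\omega(v_1,v_2)=\omega(\iota^{*}v_1,\iota^{*}v_2)=-\omega(v_1,v_2),
\]
which forces $\omega(v_1,v_2)=0$. Hence $\omega$ vanishes identically on the tangent space of $\Fix(\iota^{*})$ at every fixed point, proving that $\Fix(\iota^{*})$ is isotropic. I do not expect a serious obstacle here; the one point requiring care is the invocation of the linearisation result for the smoothness step, since in positive characteristic the fixed locus of a $\mathbb{Z}/2$-action can fail to be smooth, but we are working over $\mathbb{C}$ throughout.
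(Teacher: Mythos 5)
Your proposal is correct and follows essentially the same route as the paper: the paper simply cites Edixhoven for the smoothness and projectivity of the fixed locus of a tame finite-order automorphism (the linearisation argument you spell out), and derives isotropy from Lemma \ref{anti} exactly as you do via $\omega(v_1,v_2)=\omega(\iota^{*}v_1,\iota^{*}v_2)=-\omega(v_1,v_2)$ on the $(+1)$-eigenspace.
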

\begin{proof}
$\Fix(\iota^{*})$ is smooth and projective by \cite[3.1., 3.4.]{edix}. The previous Lemma \ref{anti} shows that is also isotropic.
\end{proof}

For the rest of this section we need the following

\begin{rem}\label{eflat}
For a torsion free $\mathcal{A}$-module $E$ of rank one on $X$, the $\mathcal{A}$-modules $E^{**}$ and $E\otimes L$ for $L\in \Pic(X)$ are also torsion free of rank one. In addition $\overline{E}$ is a torsion free $\overline{\mathcal{A}}$-module of rank one on $\overline{X}$ since $\pi$ is flat.
\end{rem}

\begin{thm}\label{thm2}
Let $X$ be a very general Enriques surfaces and let $\mathcal{A}$ be a quaternion algebra on $X$ representing the nontrivial class in $\Br(X)$.
\begin{enumerate}[i)]
\item The moduli scheme $\M_{\mathcal{A}/X}$ of torsion free $\mathcal{A}$-modules of rank one is smooth.
\item Every torsion free $\mathcal{A}$-module of rank one can be deformed into a locally projective $\mathcal{A}$-module, i.e. the locus $\M_{\mathcal{A}/X}^{lp}$ of locally projective $\mathcal{A}$-modules is dense in $\M_{\mathcal{A}/X}$.
\item For fixed Chern classes $c_1$ and $c_2$ we have 
\begin{equation*}
\dim \M_{\mathcal{A}/X,c_1,c_2}=\frac{\Delta}{4}-c_2(\mathcal{A})-3
\end{equation*}
where $\Delta=8c_2-3c_1^2$ is the discriminant.
\end{enumerate}
\end{thm}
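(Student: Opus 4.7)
My plan is to prove (i), deduce (iii) immediately from it via an Euler-characteristic comparison with the K3 cover, and finally establish (ii) by a stratification and dimension-count argument. The crucial geometric input throughout is that $\omega_X$ coincides with the 2-torsion line bundle $L$ defining the \'etale cover $\pi\colon\overline{X}\to X$, so the vanishing results of Section \ref{1} apply directly. For smoothness, I want to show $\Ext^2_{\mathcal{A}}(E,E)=0$ at every $[E]\in\M_{\mathcal{A}/X,c_1,c_2}$; by Proposition \ref{serre} this group is dual to $\Hom_{\mathcal{A}}(E,E\otimes\omega_X)$, which vanishes by Corollary \ref{homvan} as soon as $\overline{E}$ is a simple $\overline{\mathcal{A}}$-module. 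This is automatic: by Remark \ref{eflat}, $\overline{E}$ is torsion free of rank one on $\overline{X}$, and any such module is simple, as noted in the remark following the definition of generically simple modules.

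Once smoothness is established, $\End_{\mathcal{A}}(E)=\mathbb{C}$ together with $\Ext^2_{\mathcal{A}}(E,E)=0$ give
\[
\dim_{[E]}\M_{\mathcal{A}/X,c_1,c_2}=1-\chi_{\mathcal{A}}(E,E).
\]
The computational key is that Lemma \ref{hom} promotes to higher Ext groups: the same projection-formula chain applied to the derived sheaf-$\Ext$ goes through because $q$ is finite and flat, yielding
\[
\chi_{\overline{\mathcal{A}}}(\overline{E},\overline{E})=\chi_{\mathcal{A}}(E,E)+\chi_{\mathcal{A}}(E,E\otimes\omega_X).
\]
Serre duality on the Enriques surface makes the two summands on the right equal, so $\chi_{\mathcal{A}}(E,E)=\tfrac{1}{2}\chi_{\overline{\mathcal{A}}}(\overline{E},\overline{E})$. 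Inserting the dimension formula of Theorem \ref{hoff}~iv) together with the \'etale-degree-two identities $\overline{c_1}^2=2c_1^2$, $\overline{c_2}=2c_2$, $c_2(\overline{\mathcal{A}})=2c_2(\mathcal{A})$, so that $\overline{\Delta}=2\Delta$, then yields the formula in (iii) by elementary arithmetic.

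For density, the openness of $\M^{lp}_{\mathcal{A}/X}\subset\M_{\mathcal{A}/X}$ is standard, so density reduces to a dimension estimate on the complement. I would stratify $\M_{\mathcal{A}/X,c_1,c_2}$ by the length $\ell\geq 1$ of the torsion quotient $T$ in the canonical sequence $0\to E\to E^{**}\to T\to 0$ from the proof of Lemma \ref{double2}. Sending $E$ to the pair $(E^{**},E^{**}\twoheadrightarrow T)$ defines a morphism from the length-$\ell$ stratum into a relative Quot scheme over $\M^{lp}_{\mathcal{A}/X,c_1,c_2-\ell}$, whose base has dimension $\dim\M_{\mathcal{A}/X,c_1,c_2}-2\ell$ by (iii), and whose fibers are controlled \'etale-locally by splitting the Azumaya algebra and invoking Morita equivalence to reduce to a classical Hilbert-scheme-of-points calculation. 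The main obstacle I anticipate is the precise dimension bound on these noncommutative Quot-fibers together with surjectivity of the parametrization; once handled, paralleling the K3 argument of \cite{hoff} behind Theorem \ref{hoff}~iii), smoothness combined with the strict inequality on stratum dimensions forces $\M^{lp}_{\mathcal{A}/X}$ to be dense.
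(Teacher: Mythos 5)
Your part (i) is exactly the paper's argument: Serre duality identifies $\Ext^2_{\mathcal{A}}(E,E)$ with $\Hom_{\mathcal{A}}(E,E\otimes\omega_X)^{\vee}$, which vanishes by Corollary \ref{homvan} because $\overline{E}$ is a torsion free $\overline{\mathcal{A}}$-module of rank one and hence simple. Parts (ii) and (iii), however, take genuinely different routes. The paper proves (ii) first, by adapting the deformation-theoretic argument of Hoffmann--Stuhler: the only new input is the vanishing $\Ext^2_{\mathcal{A}}(E^{**},E)=0$, which by Serre duality is $\Hom_{\mathcal{A}}(E,E^{**}\otimes\omega_X)=0$, i.e.\ exactly Lemma \ref{double2}; this makes the connecting map $\delta\colon\Ext^1_{\mathcal{A}}(E,E)\to\Ext^2_{\mathcal{A}}(T,E)$ surjective and lets one deform $E$ so as to decrease the length of $T$. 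Only afterwards does the paper obtain (iii), by computing $\chi(X,\mathcal{E}nd_{\mathcal{A}}(E))$ via Hirzebruch--Riemann--Roch for a locally projective $E$. You reverse the order: you get (iii) directly from the Euler-characteristic comparison $\chi_{\overline{\mathcal{A}}}(\overline{E},\overline{E})=\chi_{\mathcal{A}}(E,E)+\chi_{\mathcal{A}}(E,E\otimes\omega_X)=2\chi_{\mathcal{A}}(E,E)$ (the higher-$\Ext$ version of Lemma \ref{hom} plus Serre duality), which is correct, needs no input from (ii), and shows moreover that $\M_{\mathcal{A}/X,c_1,c_2}$ is pure of the stated dimension; you then deduce (ii) by stratifying according to $\ell=\mathrm{length}(E^{**}/E)$ and counting dimensions. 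That strategy is viable, but the fiber bound you flag as the main obstacle really is the crux and should be made precise: after an \'etale splitting and Morita equivalence the fiber over $[E^{**}]$ is not a Hilbert scheme of points but the Quot scheme of zero-dimensional quotients of length $\ell/2$ of a rank-two bundle, which has dimension $3\ell/2$ (length-$m$ quotients of a rank-$r$ locally free sheaf on a surface form a scheme of dimension $m(r+1)$); together with the base dimension $\dim\M_{\mathcal{A}/X,c_1,c_2}-2\ell$ from (iii) this bounds each stratum by $\dim\M_{\mathcal{A}/X,c_1,c_2}-\ell/2$, and purity then forces density. In sum, your route decouples (iii) from (ii) and replaces a deformation argument by a dimension count at the cost of importing the Quot-scheme dimension estimate, whereas the paper's route is more self-contained and yields the slightly stronger statement that every individual non-locally-projective module admits an explicit first-order deformation smoothing its torsion quotient.
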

\begin{proof}
\begin{enumerate}[i)]
\item For a given point $[E]\in M_{\mathcal{A}/X}$ we have to show that all obstruction classes in $\Ext^2_{\mathcal{A}}(E,E)$ vanish. But by Proposition \ref{serre} we have:
\begin{equation*}
\Ext^2_{\mathcal{A}}(E,E)\cong \left( \Hom_{\mathcal{A}}(E,E\otimes\omega_X)\right)^{\vee}.
\end{equation*}
As $\overline{E}$ is a simple $\overline{\mathcal{A}}$-module, we get $\Hom_{\mathcal{A}}(E,E\otimes\omega_X)=0$ by Corollary \ref{homvan}. Thus all obstructions vanish and $M_{\mathcal{A}/X}$ is smooth at $\left[E\right]$.
\item The proof of \cite[Theorem 3.6.iii)]{hoff} carries over to our situtaion with one small change: the surjectivity of the connecting homomorphisms $\delta$ in the diagram:
\begin{equation*}
\begin{tikzcd}
\Ext^1_{\mathcal{A}}(E,E) \arrow{r}{\delta} & \Ext^2_{\mathcal{A}}(T,E) \arrow{r}{\pi^{*}}\arrow{d}{\iota_{*}} & \Ext^2_{\mathcal{A}}(E^{**},E)\\
 & \Ext^2_{\mathcal{A}}(T,E^{**}) \arrow[equal]{r} & \bigoplus\limits_{i=1}^l\Ext^2_{\mathcal{A}}(T_{x_i},E^{**})
\end{tikzcd}
\end{equation*} 
follows from the fact that
\begin{equation*}
\Ext^2_{\mathcal{A}}(E^{**},E)=0.
\end{equation*}
This vanishing can be seen as follows: using Proposition \ref{serre} we have
\begin{equation*}
\Ext^2_{\mathcal{A}}(E^{**},E)\cong\left(\Hom_{\mathcal{A}}(E,E^{**}\otimes\omega_X) \right)^{\vee}.
\end{equation*}
But the last space is zero by Lemma \ref{double2}. The rest of the proof works unaltered.
\item Using ii) is suffices to compute the dimension of
\begin{equation*}
T_{\left[E\right]}M_{\mathcal{A}/X} \cong \Ext^1_{\mathcal{A}}(E,E)\cong H^1(X,\mathcal{E}nd_{\mathcal{A}}(E))
\end{equation*}
for a locally projective $\mathcal{A}$-module $E$ of rank one. 

Again as in \cite[Theorem 3.6.iv)]{hoff} we have:
\begin{equation*}
c_1(\mathcal{E}nd_{\mathcal{A}}(E))=0 \,\,\,\,\text{and}\,\,\,c_2(\mathcal{E}nd_{\mathcal{A}}(E)))=\frac{\Delta}{4}-c_2(\mathcal{A})
\end{equation*}
where $\Delta$ is the discriminant of $E$. So by Hirzebruch-Riemann-Roch:
\begin{equation*}
\chi(X,\mathcal{E}nd_{\mathcal{A}}(E))=-\frac{\Delta}{4}+c_2(\mathcal{A})+4\chi(X,\mathcal{O}_X)
\end{equation*}
Using $\End_{\mathcal{A}}(E)\cong\mathbb{C}$, $\Ext^2_{\mathcal{A}}(E,E)=0$ and $\chi(X,\mathcal{O}_X)=1$ we get our result.
\end{enumerate}
\end{proof}

\begin{rem}\label{notiso}
The proof of i) also implies $E\ncong E\otimes\omega_X$ for all torsion free $\mathcal{A}$-modules of rank one.
\end{rem}

Similar to the involution $\iota$, using Remark \ref{eflat}, the projection $\pi:\overline{X}\rightarrow X$ induces a morphism
\begin{equation*}
\pi^{*}: \M_{\mathcal{A}/X,c_1,c_2} \rightarrow \M_{\overline{\mathcal{A}}/\overline{X},\overline{c_1},\overline{c_2}},\,\,\,\, \left[E\right] \mapsto \left[\overline{E}\right].  
\end{equation*}
Our goal is to understand this morphism:

\begin{thm}\label{moduli}
Let the pair $(X,\mathcal{A})$ be as in Theorem \ref{thm2}. The pullback map 
\begin{equation*}
\pi^{*}: \M_{\mathcal{A}/X,c_1,c_2} \rightarrow \M_{\overline{\mathcal{A}}/\overline{X},\overline{c_1},\overline{c_2}}
\end{equation*}
factors through $\Fix(\iota^{*})$ restricting to an \'{e}tale double cover
\begin{equation*}
\varphi: \M_{\mathcal{A}/X,c_1,c_2} \rightarrow \Fix(\iota^{*}).
\end{equation*}
\end{thm}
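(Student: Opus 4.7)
The plan is to establish three properties of $\pi^{*}$ in order: (i) the image lies in $\Fix(\iota^{*})$, giving the factorization $\varphi$; (ii) $\varphi$ is surjective with every fibre of size exactly two, the two preimages of $[\overline{E}]$ being $[E]$ and $[E\otimes \omega_{X}]$; (iii) $\varphi$ is étale. Part (i) is immediate from $\pi\circ \iota = \pi$, which yields $\iota^{*}\overline{E}\cong \overline{E}$. Combining (i)--(iii) with the smoothness of source and target then gives the étale double cover.

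For (ii), I would first verify that $E\otimes \omega_{X}$ lives in $\M_{\mathcal{A}/X,c_{1},c_{2}}$: it is torsion free of rank one, it is non-isomorphic to $E$ by Remark \ref{notiso}, and its Chern classes agree with those of $E$ since $c_{1}(\omega_{X})$ is $2$-torsion while $H^{4}(X,\mathbb{Z})\cong \mathbb{Z}$ is torsion free, killing all cross-terms in the usual Chern-class formula for a tensor with a line bundle. Conversely, if $\overline{E_{1}}\cong \overline{E_{2}}$, Lemma \ref{hom} gives
\begin{equation*}
\Hom_{\overline{\mathcal{A}}}(\overline{E_{1}},\overline{E_{2}})\cong \Hom_{\mathcal{A}}(E_{1},E_{2})\oplus \Hom_{\mathcal{A}}(E_{1},E_{2}\otimes \omega_{X}),
\end{equation*}
so one of the two summands is nonzero and Lemma \ref{moduleiso} forces $E_{1}\cong E_{2}$ or $E_{1}\cong E_{2}\otimes \omega_{X}$. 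For surjectivity onto $\Fix(\iota^{*})$, any $[F]\in \Fix(\iota^{*})$ satisfies $F\cong \iota^{*}F$ and is simple (a torsion free rank one $\overline{\mathcal{A}}$-module is simple), so the descent Theorem \ref{desc} produces an $\mathcal{A}$-module $E$ with $\overline{E}\cong F$; flatness of $\pi$ ensures $E$ is torsion free of rank one with the prescribed Chern classes.

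For (iii), I would compute the differential $d\varphi_{[E]}$ and show it is an isomorphism. Using adjunction for the finite étale morphism $\pi$ combined with $\pi_{*}\mathcal{O}_{\overline{X}}\cong \mathcal{O}_{X}\oplus \omega_{X}$, one obtains the $\Ext^{1}$-analogue of Lemma \ref{hom},
\begin{equation*}
\Ext^{1}_{\overline{\mathcal{A}}}(\overline{E},\overline{E})\cong \Ext^{1}_{\mathcal{A}}(E,E)\oplus \Ext^{1}_{\mathcal{A}}(E,E\otimes \omega_{X}),
\end{equation*}
with the tangent map of $\pi^{*}$ being the inclusion of the first summand. The main obstacle I anticipate is to show that this first summand coincides with the $\iota^{*}$-invariant subspace, which by \cite{edix} and Corollary \ref{isot} equals $T_{[\overline{E}]}\Fix(\iota^{*})$; concretely, this amounts to matching the functorial $\iota^{*}$-action on $\Ext^{1}_{\overline{\mathcal{A}}}(\overline{E},\overline{E})$ with the $\pm 1$-eigenspace decomposition of $\pi_{*}\mathcal{O}_{\overline{X}} = \mathcal{O}_{X}\oplus \omega_{X}$, which I would do by tracking the isomorphism of Lemma \ref{hom} through the projection formula in the presence of the involution. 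Once the invariants are identified with $\Ext^{1}_{\mathcal{A}}(E,E)$, smoothness of both moduli (Theorem \ref{thm2} and Corollary \ref{isot}) makes $d\varphi_{[E]}$ an isomorphism, so $\varphi$ is étale, completing the argument.
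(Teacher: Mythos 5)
Your factorization, surjectivity, and fibre arguments are exactly the paper's: $\pi\circ\iota=\pi$ gives the factorization through $\Fix(\iota^{*})$, Theorem \ref{desc} gives surjectivity, and the decomposition of $\Hom_{\overline{\mathcal{A}}}(\overline{E},\overline{F})$ from Lemma \ref{hom} together with Lemma \ref{moduleiso} and Remark \ref{notiso} identifies each fibre as the two distinct points $[E]$, $[E\otimes\omega_X]$ (your explicit check that tensoring by the $2$-torsion class $\omega_X$ preserves $c_1$ and $c_2$ is a detail the paper leaves implicit in Remark \ref{eflat}). Where you genuinely diverge is the last step. The paper deduces flatness of $\varphi$ from the quasi-finite surjection between smooth varieties via the miracle-flatness lemma of Schaps, and then concludes \'{e}tale from flat plus unramified; you instead propose to show that $d\varphi_{[E]}$ is an isomorphism by proving the $\Ext^1$-analogue of Lemma \ref{hom} (which follows from the same projection-formula computation applied to $\mathcal{E}xt$ sheaves, or from adjunction for the finite flat $\pi$) and matching the summand $\Ext^1_{\mathcal{A}}(E,E)$ with the $\iota^{*}$-invariant subspace, i.e.\ with $T_{[\overline{E}]}\Fix(\iota^{*})$ by \cite{edix}. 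Your route is viable and in some sense sharper --- it produces the tangent map explicitly and makes the unramifiedness (which in the paper really comes out of flatness plus the reduced two-point fibres) transparent --- but it carries the burden you yourself flag: one must check that the canonical identification $\iota^{*}\overline{E}\cong\overline{E}$ induces on $\Ext^1_{\mathcal{A}}(E,E\otimes\pi_{*}\mathcal{O}_{\overline{X}})$ precisely the $\pm1$-eigenspace decomposition with $\mathcal{O}_X$-summand invariant and $\omega_X$-summand anti-invariant (simplicity of $\overline{E}$ at least guarantees the induced action on $\Ext^1$ is independent of the chosen isomorphism). The paper's appeal to miracle flatness sidesteps that bookkeeping entirely at the cost of quoting an external lemma; you should either carry out the eigenspace identification or fall back on the flatness argument, but both paths close the proof.
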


\begin{proof}
We have
\begin{equation*}
\iota^{*}\overline{E}=\iota^{*}\pi^{*}E\cong(\pi\circ\iota)^{*}E=\pi^{*}E=\overline{E}.
\end{equation*}
So $\Ima(\pi^{*})\subset \Fix(\iota^{*})$ and hence $\pi^{*}$ factors through $\Fix(\iota^{*})$ giving rise to 
\begin{equation*}
\varphi: \M_{\mathcal{A}/X,c_1,c_2} \rightarrow \Fix(\iota^{*}).
\end{equation*}
By Theorem \ref{desc} we also have $\Fix(\iota^{*})\subset \Ima(\pi^{*})$. So $\Ima(\pi^{*})=\Fix(\iota^{*})$ and the morphism $\varphi$ is surjective.

Assume $\varphi(\left[E\right] )=\varphi(\left[F\right] )$ that is $\overline{E}\cong\overline{F}$ and $\Hom_{\overline{\mathcal{A}}}(\overline{E},\overline{F})\neq 0$. Then Lemma \ref{hom} says
\begin{equation*}
\Hom_{\overline{\mathcal{A}}}(\overline{E},\overline{F}) \cong \Hom_{\mathcal{A}}(E,F)\oplus \Hom_{\mathcal{A}}(E,F\otimes\omega_X) 
\end{equation*}
and so by Lemma \ref{moduleiso} and Remark \ref{eflat} we have 
\begin{equation*}
E\cong F\,\,\,\,\text{or}\,\,\,\, E\cong F\otimes\omega_X
\end{equation*}
but not both by Remark \ref{notiso}. So $\varphi$ is an unramified $2:1$-morphism. Moreover the computations also shows that $\varphi$ is a flat morphism by \cite[Lemma, p.675]{schaps}, hence $\varphi$ is \'{e}tale.
\end{proof}

\begin{cor}
The locus of fixed points of the involution 
\begin{equation*}
\Fix(\iota^{*})\subset \M_{\overline{\mathcal{A}}/\overline{X},\overline{c_1},\overline{c_2}}
\end{equation*}
is a Lagrangian subscheme.
\end{cor}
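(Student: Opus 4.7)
The plan is to combine the two key facts already on record: $\Fix(\iota^{*})$ is isotropic by Corollary \ref{isot}, so the only thing left to verify is that it has dimension equal to half of $\dim \M_{\overline{\mathcal{A}}/\overline{X},\overline{c_1},\overline{c_2}}$. Since being Lagrangian is the conjunction of being isotropic and being half-dimensional in a symplectic ambient space, everything will reduce to a dimension count.

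First I would use Theorem \ref{moduli}: the morphism $\varphi\colon \M_{\mathcal{A}/X,c_1,c_2}\to \Fix(\iota^{*})$ is an \'etale double cover, so in particular $\dim \Fix(\iota^{*}) = \dim \M_{\mathcal{A}/X,c_1,c_2}$. Next, I would compare the dimension formulas from Theorem \ref{thm2}\,iii) and Theorem \ref{hoff}\,iv). Since $\pi\colon \overline{X}\to X$ is an \'etale double cover, the projection formula gives $\overline{c_1}^2 = \pi^{*}c_1\cdot \pi^{*}c_1 = 2c_1^2$ and $\overline{c_2} = 2 c_2$ (when evaluated on the fundamental class), and similarly $c_2(\overline{\mathcal{A}}) = c_2(\pi^{*}\mathcal{A}) = 2c_2(\mathcal{A})$. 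Hence
\begin{equation*}
\overline{\Delta} = 8\overline{c_2} - 3\overline{c_1}^2 = 2(8c_2 - 3c_1^2) = 2\Delta,
\end{equation*}
and therefore
\begin{equation*}
\dim \M_{\overline{\mathcal{A}}/\overline{X},\overline{c_1},\overline{c_2}} = \frac{\overline{\Delta}}{4} - c_2(\overline{\mathcal{A}}) - 6 = 2\!\left(\frac{\Delta}{4} - c_2(\mathcal{A}) - 3\right) = 2 \dim \M_{\mathcal{A}/X,c_1,c_2}.
\end{equation*}

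Combining these two steps yields $\dim \Fix(\iota^{*}) = \tfrac{1}{2}\dim \M_{\overline{\mathcal{A}}/\overline{X},\overline{c_1},\overline{c_2}}$. Together with the isotropy from Corollary \ref{isot}, this is exactly the Lagrangian condition, and the corollary follows. I do not expect any real obstacle here; the only thing to be a little careful about is the numerical comparison of Chern numbers under the degree-two \'etale pullback (which is standard) and making explicit that ``isotropic of half dimension in a symplectic manifold" is what is meant by Lagrangian in this context — both are routine once Theorems \ref{thm2} and \ref{moduli} and Corollary \ref{isot} are in place.
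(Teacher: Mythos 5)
Your proposal is correct and follows essentially the same route as the paper: isotropy from Corollary \ref{isot}, $\dim\Fix(\iota^{*})=\dim\M_{\mathcal{A}/X,c_1,c_2}$ from the \'etale double cover of Theorem \ref{moduli}, and the factor-of-two comparison of the dimension formulas in Theorems \ref{thm2} and \ref{hoff} via the degree-two pullback. The only difference is that you spell out the Chern-number doubling ($\overline{c_1}^2=2c_1^2$, etc.) which the paper compresses into ``the fact that $\pi$ is of degree $2$''.
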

\begin{proof}
The previous theorem \ref{moduli} shows
\begin{equation*}
\dim \Fix(\iota^{*})=\dim \M_{\mathcal{A}/X,c_1,c_2}.
\end{equation*}
On the other hand by Theorem \ref{hoff} and the fact that $\pi$ is of degree 2 we have
\begin{equation*}
\dim \M_{\overline{\mathcal{A}}/\overline{X},\overline{c_1},\overline{c_2}}=\frac{\overline{\Delta}}{4}-c_2(\overline{\mathcal{A}})-6=2(\frac{\Delta}{4}-c_2(\mathcal{A})-3)=2\dim \M_{\mathcal{A}/X,c_1,c_2}
\end{equation*}
Both results together give
\begin{equation*}
\dim \Fix(\iota^{*})=\frac{1}{2}\dim \M_{\overline{\mathcal{A}}/\overline{X},\overline{c_1},\overline{c_2}}
\end{equation*}
By Corollary \ref{isot} we already know that $\Fix(\iota^{*})$ is an isotropic subscheme, so it is in fact a Lagrangian subscheme of $\M_{\overline{\mathcal{A}}/\overline{X},\overline{c_1},\overline{c_2}}$.
\end{proof}


\end{document}